\theoremstyle{plain}
\newtheorem{thm}{Theorem}[section]
\newtheorem*{thm*}{Theorem}
\newtheorem*{cor*}{Corollary}
\newtheorem{lem}[thm]{Lemma}
\newtheorem{cor}[thm]{Corollary}
\newtheorem*{claim*}{Claim}
\theoremstyle{definition}
\newtheorem{ex}[thm]{Example}
\newtheorem{remark}[thm]{Remark}
\theoremstyle{remark}
\numberwithin{equation}{thm}
\def\Z{\mathbb{Z}}
\def\rank{\operatorname{rank}}
\def\m{\mathfrak m}
\def\p{\mathfrak p}
\newcommand{\calA}{\mathcal{A}}
\newcommand{\calR}{\mathcal{R}}
\newcommand{\calS}{\mathcal{S}}
\newcommand{\fkm}{\mathfrak{m}}
\newcommand{\fkp}{\mathfrak{p}}
\newcommand{\mapright}[1]{%
\smash{\mathop{%
\hbox to 1cm{\rightarrowfill}}\limits^{#1}}}
\newcommand{\mapleft}[1]{%
\smash{\mathop{%
\hbox to 1cm{\leftarrowfill}}\limits_{#1}}}
\def\depth{\operatorname{depth}}
\def\height{\mathrm{ht}}
\def\grade{\mathrm{grade}}
\def\Spec{\operatorname{Spec}}
\def\gr{\mbox{\rm gr}}
\def\ds{\displaystyle}
\def\tratto{\mbox{\rule{3mm}{.2mm}$\;\!$}}
\title[Normality of Ideals and Modules]{Normality of Ideals and Modules}
\author[Naoki Endo]{Naoki Endo}
\address{School of Political Science and Economics, Meiji University, 1-9-1 Eifuku, Suginami-ku, Tokyo 168-8555, Japan}
\email{endo@meiji.ac.jp}
\urladdr{https://www.isc.meiji.ac.jp/~endo/}
\author[Shiro Goto]{Shiro Goto}
\address{Department of Mathematics, School of Science and Technology, Meiji University, 1-1-1 Higashi-mita, Tama-ku, Kawasaki 214-8571, Japan}
\email{shirogoto@gmail.com}
\author[Jooyoun Hong]{Jooyoun Hong}
\address{Department of Mathematics, Southern Connecticut State University, 501 Crescent Street, New Haven, CT 06515-1533, USA}
\email{hongj2@southernct.edu}
\author[Bernd Ulrich]{Bernd Ulrich}
\address{Department of Mathematics, Purdue University, West Lafayette, IN 47907 USA}
\email{bulrich@purdue.edu}
\thanks{2020 {\em Mathematics Subject Classification.} 13H10, 13B22, 13B25, 13A30.}
\thanks{{\em Key words and phrases.} Rees algebra, Cohen-Macaulay ring, normal ring, generic Bourbaki ideal}
\thanks{The first author was partially supported by JSPS Grant-in-Aid for Scientific Research (C) 23K03058.} 
\thanks{The fourth author was partially supported by NSF grant DMS-2502706.}
\begin{document}

\maketitle

\setlength{\baselineskip}{15.3pt}

\begin{abstract}
We investigate when the Rees algebra of an integrally closed $\m$-primary ideal in a regular local ring is a Cohen–Macaulay normal domain. While this property always holds in dimension two, it fails in general in higher dimensions, prompting a search for sufficient conditions on the ideal. We show that if an integrally closed ideal contains a part of  regular system of parameters of length $d-2$, where $d$ is the dimension of the regular local ring, then its Rees algebra is Cohen–Macaulay and normal. We also extend results of Goto and Ciuperc\u{a} by proving the same conclusion when the minimal number of generators of an ideal is at most $d+2$.
Furthermore, we treat the case of integrally closed zero-dimensional ideals generated by $d+3$ homogeneous polynomials. Finally, using generic Bourbaki ideals, we generalize these results to integrally closed torsionfree modules of finite colength.
\end{abstract}

\section{Introduction}\label{sec1}

In this paper, we investigate the conditions under which the Rees algebra of an integrally closed $\m$-primary ideal $I$ in a regular local ring $(R, \m)$ of dimension $d$ is a Cohen-Macaulay normal domain. Since the statement holds trivially when $d \le 1$, our primary focus is on the case $d \ge 2$. Recall that the {\em integral closure} $\overline{I}$ of an ideal $I$ in a commutative ring $R$ is the set of all elements $r$ that are integral over $I$, that is, satisfy a polynomial equation of the form
\[ r^{m} + a_{1} r^{m-1} + \cdots+ a_{j} r^{m-j} + \cdots + a_{m} =0, \]
where ${\ds a_{j} \in I^{j}}$. An ideal $I$ is said to be {\em integrally closed} if $I=\overline{I}$. Furthermore, $I$ is called {\em normal} if ${\ds I^{n} = \overline{I^{n}}}$ for every positive integer $n$. Alternatively, we consider the {\em Rees algebra} $\calR(I)$ of $I$, which is the subalgebra of the polynomial ring $R[t]$ defined by
\[ \calR(I) =  \bigoplus_{n \geq 0} I^{n} t^{n}. \] The integral closure ${\ds \overline{\calR(I)}}$ of $\calR(I)$ in $R[t]$ is also a graded algebra, and its graded components are the integral closures of all powers of $I$:
\[ \overline{\calR(I)} =  \bigoplus_{n \geq 0} \overline{I^{n}} t^{n}. \]
Provided that $R$ is a normal domain, the integral closure of $\calR(I)$ in its field of fractions coincides with ${\ds \overline{\calR(I)}}$. Accordingly, the Rees algebra $\calR(I)$ is said to be {\em normal} if ${\ds \calR(I) = \overline{\calR(I)}}$.
Therefore, an ideal $I$ is normal if and only if its Rees algebra is normal.

\medskip

Zariski demonstrated that in the polynomial ring with two variables over a field, the product of any two integrally closed ideals is also integrally closed. This implies that the Rees algebra is normal (\cite[Part II, Section 12]{Z}). Although Zariski's original result was stated for the polynomial ring, the analogous property for two-dimensional regular local rings was established in \cite[Appendix 5, Theorem 2$'$]{ZS}. Furthermore, Lipman and Teissier proved that the reduction number of any integrally closed $\m$-primary ideal is at most one (\cite[Proposition 5.5]{LT}). Consequently, the Rees algebra is Cohen-Macaulay. Therefore, if $(R, \m)$ is a two-dimensional regular local ring and $I$ is an integrally closed ideal, then the Rees algebra $\calR(I)$ is indeed a Cohen-Macaulay normal domain. In addition, Lipman showed that if $R$ is a two-dimensional rational singularity with an infinite residue field, then the Rees algebra $\calR(I)$ is a Cohen-Macaulay normal domain (\cite[Section 7]{L69}). 

\medskip

However, if the dimension of the ring is greater than $2$, then the Rees algebra of an integrally closed ideal is not necessarily a Cohen-Macaulay normal domain. For instance, consider the formal power series ring $R = k[\![X, Y, Z]\!]$ over a field $k$, and examine the ideals 
\[
Q = (X^7, Y^3, Z^2) \ \ \text{and} \ \ I = \overline{Q} = (X^7, Y^3, Z^2, X^5Y, X^4Z, X^3Y^2, X^2YZ, Y^2Z).
\]
In this case, $I$ is integrally closed, but $\overline{I^2} \ne I^2$, and $I^2 = QI$. Thus, $\calR(I)$ is Cohen-Macaulay but not normal. While one might expect that normality implies the Cohen-Macaulay property, this implication does not hold in general. Suppose further that the characteristic of $k$ is not equal to $3$, and consider 
\[
I = (X^4, X(Y^3+Z^3), Y(Y^3+Z^3), Z(Y^3+Z^3)) + \m^5
\]
where $\m =(X, Y, Z)$. Then, by \cite[Theorem 3.11]{HH}, the ideal $I$ is normal, but the associated graded ring $\gr_{I}(R)=\bigoplus_{n \ge 0}I^n/I^{n+1}$ is not Cohen-Macaulay. Therefore, $\calR(I)$ is normal, but not Cohen-Macaulay. These examples suggest that additional conditions on either the base ring or the ideal are necessary for the Rees algebra to be a Cohen-Macaulay normal domain of dimension greater than $2$. As the base ring is assumed to be regular in our setup, we focus on imposing conditions on the ideals. 

\medskip

We investigate such conditions from the perspective of the minimal number of generators, denoted by $\mu_{R}(\tratto)$. Goto proved that when $\mu_R(I) = d$ -- that is, when $I$ is a complete intersection ideal -- the Rees algebra $\calR(I)$ is a Cohen-Macaulay normal domain (\cite[Corollary (1.3)]{G}). In the same work, it was also proved that an $\m$-primary complete intersection ideal in a regular local ring is integrally closed if and only if it contains a part of regular system of parameters of length $d - 1$ (\cite[Theorem (1.1)]{G}); equivalently, this is the case if and only if $v(R/I) \le 1$, where $v(\tratto)$ denotes the embedding dimension of a ring. Furthermore, Ciuperc\u{a} proved that $\calR(I)$ is a Cohen-Macaulay normal domain also when $\mu_R(I) = d + 1$, that is, when $I$ is an almost complete intersection ideal (\cite[Theorem 1.1]{C1}, \cite[Section 4]{C2}). In this case, it is also shown that if $I$ is integrally closed and $\mu_R(I) = d + 1$, then $I$ necessarily contains a part of  regular system of parameters of length $d - 2$ (\cite[Corollary 3.3]{C1}, \cite[Section 4]{C2}). The latter condition is equivalent to $v(R/I) \le 2$.

\medskip

Building upon these results, we broaden the scope by showing that the Rees algebra $\calR(I)$ remains a Cohen-Macaulay normal domain under the more general condition $v(R/I) \le 2$, even in cases where $\mu_R(I) > d + 1$ (See Theorem~\ref{main}). Furthermore, we extend the results of Goto and Ciuperc\u{a} by showing that if $\mu_{R}(I) \le d+2$, then its Rees algebra is a Cohen-Macaulay normal domain. Our main theorem is as follows:

\noindent {\bf Theorem~\ref{div2}} {\it
Let $(R, \m)$ be a regular local ring of dimension $d \ge 3$ and $I$ an integrally closed $\m$-primary ideal of $R$. If $\mu_R(I) \le d+2$, then  $I$ contains  a part of regular system of parameters of length $d-2$. In particular, the Rees algebra $\calR(I)$ is a Cohen-Macaulay normal domain.}

\medskip

For ideals generated by at most $d+3$ elements, we analyze a zero-dimensional ideal generated by homogeneous polynomials and obtain the following result. 

\medskip

\noindent {\bf Theorem~\ref{Ulrich}} {\it 
Let $k$ be a field of characteristic zero, $R=k[X_{1}, \ldots, X_{d}]$ a polynomial ring, and $I$ a zero-dimensional $R$-ideal generated by $d+3$ homogeneous polynomials. If $I$ is integrally closed, then the Rees algebra $\calR(I)$ is a Cohen-Macaulay normal domain.}

\medskip

Let $R$ be a Noetherian ring and $E$ a finitely generated $R$-module having a rank. The Rees algebra $\calR(E)$ of the module $E$ is defined as the symmetric algebra $\calS(E)$ modulo $R$-torsion. If $E$ is torsionfree, then it can be embedded into a free module $R^{e}$. In this case, $\calR(E)$ is the $R$-subalgebra of the polynomial ring  ${\ds R[t_{1}, \ldots, t_{e}] }$ generated by all linear forms $a_{1}t_{1} + \cdots + a_{e}t_{e}$, where $(a_{1}, \ldots, a_{e})$ is the image of an element of $E$ in $R^{e}$ under the embedding. In particular, when $E=I \subseteq R$ is an ideal, this definition of the Rees algebra coincides with the one given at the beginning of Introduction. The integral closure $\overline{E}$ of $E$ (in $R^{e}$) is the largest submodule of $R^{e}$ whose Rees algebra is integral over $\calR(E)$. If ${\ds E=\overline{E}}$, then $E$ is said to be {\em integrally closed}. Using the notion of a generic Bourbaki ideal, we extend our results from ideals to those of modules. For an $R$-module $M$, we denote the length of $M$ by $\ell_{R}(M)$, and the deviation of $M$ by $d(M)$ (See \cite[Page 633]{SUV}).

\medskip

\noindent {\bf Theorem~\ref{mod}} {\it Let $(R, \m)$ be a regular local ring of dimension $d \geq 3$, and  $E$  a finitely generated torsionfree $R$-module having a rank $e >0$.  Suppose that $E$ is integrally closed and  $\ell_R(R^{e}/E)< \infty$.
If $\mu_R([E + \m R^{e}]/E) \leq 2$ or $d(E) \leq 2$, then  the Rees algebra  $\calR(E)$ is a Cohen-Macaulay normal domain.
}

\section{Cohen-Macaulay normal Rees algebras}

A local ring $(R, \m)$ of dimension $d$ is called {\em regular} if the maximal ideal $\m$ can be generated by exactly $d$ elements. In this case, a minimal system of generators for $\m$ is called a {\em regular system of parameters}. Note that an $R$-ideal $I$ contains a part of regular system of parameters of length $d-2$ if and only if ${\ds \ell_R( (I+\m^2)/\m^2) \ge d-2}$.

\begin{thm}\label{main}
Let $(R, \m)$ be a regular local ring of dimension $d \geq 2$ and $I$ an integrally closed $\m$-primary ideal of $R$. Suppose that $I$ contains a part of regular system of parameters of length $d-2$. Then the Rees algebra $\calR(I)$ is a Cohen-Macaulay normal domain.
\end{thm}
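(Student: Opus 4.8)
The plan is to reduce to the two-dimensional case by cutting down with a suitable part of a regular system of parameters contained in $I$. Write $I = (x_1, \ldots, x_{d-2}) + J$ where $x_1, \ldots, x_{d-2}$ is a part of a regular system of parameters of $R$; extend it to a full regular system of parameters $x_1, \ldots, x_d$. Set $\overline{R} = R/(x_1, \ldots, x_{d-2})$, a two-dimensional regular local ring with maximal ideal $\overline{\m}$ generated by the images of $x_{d-1}, x_d$. I first want to establish that $\overline{I} := I\overline{R} = I/(x_1, \ldots, x_{d-2})$ is an integrally closed $\overline{\m}$-primary ideal of $\overline{R}$. Primariness is clear since $I$ is $\m$-primary. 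For integral closedness, the key input is that $x_1, \ldots, x_{d-2}$ form part of a minimal generating set of $I$ that is "generic enough"; more precisely, by a theorem of Itoh and Huneke (or by the theory of superficial elements), a general part of a regular system of parameters inside an integrally closed ideal can be taken to be a superficial sequence, and reducing a normal (even just integrally closed with the appropriate superficiality) ideal modulo a superficial element preserves integral closedness. Since $x_1, \ldots, x_{d-2}$ is a regular sequence on $R$ (hence on $\calR(I)$ in appropriate degrees) and on the associated graded ring in sufficiently high degrees, the filtration $\{\overline{I^n}\}$ descends correctly.

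The second step is to lift the conclusion back up. Having arranged that $\overline{R}$ is a two-dimensional regular local ring and $\overline{I}$ is integrally closed, the Zariski–Lipman–Teissier results quoted in the introduction give that $\calR(\overline{I})$ is a Cohen–Macaulay normal domain; in particular $\overline{I}$ is normal and has reduction number at most one, so $\overline{I}^{\,n+1} = Q\overline{I}^{\,n}$ for a minimal reduction $Q$ and all $n \geq 1$. Translating back, this says $I^{n+1} + (x_1, \ldots, x_{d-2}) I^n = (\text{reduction}) \cdot I^n + (x_1, \ldots, x_{d-2}) I^n$, i.e. after killing the regular sequence the reduction number of $I$ is one. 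Because $x_1, \ldots, x_{d-2}$ is a regular sequence contained in $I$, and because elements of a regular system of parameters behave well with respect to the Rees filtration (they form a regular sequence on $\calR(I)$ in the relevant sense, being part of a minimal generating set of $\m \supseteq I$), one deduces that $\calR(I)/(x_1, \ldots, x_{d-2})\calR(I) \cong \calR(\overline{I})$ up to nilpotents / torsion, and hence $\calR(I)$ is Cohen–Macaulay by standard deformation arguments: a sequence that is regular modulo a regular ideal and whose quotient ring is Cohen–Macaulay forces the total ring to be Cohen–Macaulay. Normality of $\calR(I)$ then follows from Serre's criterion: $(R_1)$ holds since $\calR(I)$ is a domain of the expected dimension with $\calR(\overline{I})$ normal controlling the relevant codimension-one behavior, and $(S_2)$ is a consequence of Cohen–Macaulayness.

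The main obstacle, and the step requiring the most care, is the claim that reducing modulo the part of a regular system of parameters inside $I$ preserves integral closedness and does not destroy the ring-theoretic structure of the Rees algebra — that is, controlling $\overline{I^n}$ after passing to $\overline{R}$, and ensuring $x_1, \ldots, x_{d-2}$ is genuinely a regular sequence on $\calR(I)$ (equivalently, that these elements, viewed in degree zero, do not annihilate any $I^n t^n$ and that the quotient is $\calR(\overline I)$ without extra torsion). The subtlety is that a randomly chosen part of a regular system of parameters need not be superficial; one must use that $I$ is $\m$-primary and invoke prime avoidance together with the fact (from the theory of superficial sequences, e.g. Huneke–Swanson) that for an integrally closed $\m$-primary ideal a general part of a regular system of parameters is superficial. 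After the residue field is enlarged by a faithfully flat extension (which affects neither integral closedness, Cohen–Macaulayness, nor normality), such a generic choice is available, and the induction on $d$ closes. All remaining points — the deformation-theoretic transfer of Cohen–Macaulayness, and Serre's $(R_1)+(S_2)$ for normality — are routine.
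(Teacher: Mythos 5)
Your overall strategy (cut down by regular parameters contained in $I$, use the dimension-two theory, lift back) is close in spirit to the paper's, but as written it has two genuine gaps, both at the steps you yourself flag as delicate. First, the descent of integral closedness. You claim that modding out by a (general, superficial) part of a regular system of parameters contained in $I$ preserves integral closedness, citing superficiality. Superficiality is not enough: in general $\overline{I}/(x) \subsetneq \overline{I/(x)}$ even for superficial $x \in I$, and there is no theorem saying that a general member of a regular system of parameters lying in $I$ works. What is true, and what the paper uses, is a specialization-of-integral-closure theorem for a \emph{generic} element $x=\sum_i z_i a_i$ taken over the faithfully flat extension $R''=R[z_1,\dots,z_n]_{\m R[z_1,\dots,z_n]}$ (Hong--Ulrich, Cor.~2.3; in essence Itoh's theorem). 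This is a substantive result, not a consequence of prime avoidance plus superficiality, and your proposal does not supply a substitute for it. Note also that the paper does not cut by the given parameters at all: it inducts on $d$, cutting by one generic element at a time; the hypothesis $I\not\subseteq\m^2$ guarantees $x\notin(\m'')^2$, so $R''/(x)$ is again regular, and a length count shows the ``contains $d-3$ regular parameters'' hypothesis persists, which is what drives the induction. Your one-shot reduction to dimension two would need the generic specialization theorem applied $d-2$ times anyway, so nothing is gained by avoiding the induction.

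Second, the lift back. Normality of $\calR(I)$ does not follow from ``Serre's criterion with $\calR(\overline I)$ controlling the codimension-one behavior''; one must actually prove $\overline{I^n}=I^n$ for all $n$. The paper does this by an induction on $n$ using two inputs: $\overline{(I'')^n}\subseteq (I'')^n+(x)$ (from normality of the quotient ideal) and the colon property $\overline{(I'')^n}:_{R''}x=\overline{(I'')^{n-1}}$, which comes from $xt$ being a regular element on $\overline{\calA}/t^{-1}\overline{\calA}$ for the integral closure $\overline{\calA}$ of the extended Rees algebra (Hong--Ulrich, Lemma~1.1(c)) --- again a property of the generic element, not of an arbitrary or merely superficial choice. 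Your proposal has no mechanism playing the role of this colon formula. Similarly, the Cohen--Macaulay transfer cannot be done ``up to nilpotents/torsion'': the kernel of $\calR(I)/(x_1,\dots,x_{d-2})\calR(I)\twoheadrightarrow \calR(I\overline R)$ is $\bigoplus_n \bigl(I^n\cap(x_1,\dots,x_{d-2})\bigr)/(x_1,\dots,x_{d-2})I^n$, and showing it vanishes is essentially a Valabrega--Valla condition that must be proved, not assumed. The paper sidesteps this by working with the associated graded ring: $xt$ is regular on $\gr_{I''}(R'')$ with quotient $\gr_{I''/(x)}(R''/(x))$, which is Cohen--Macaulay by induction, and then Lipman's theorem converts Cohen--Macaulayness of $\gr$ into Cohen--Macaulayness of $\calR(I'')$. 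To repair your argument you would need to import exactly these ingredients (generic element over a transcendental extension, the specialization and colon theorems of Itoh/Hong--Ulrich, and the $\gr$-plus-Lipman route to Cohen--Macaulayness), at which point you have reconstructed the paper's proof.
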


\begin{proof}
By passing to the faithfully flat extension $R \to R[X]_{\m R[X]}$ if necessary, we may assume that the residue field $R/\m$ is infinite. We prove the assertion by induction on  $d$.

\medskip

\noindent Let $d=2$ and let $Q$ be a parameter ideal that is a minimal reduction of $I$. By \cite[Proposition 5.5]{LT} (see also \cite[Theorem 5.1]{H}), it follows that $I^2 = QI$. Then by \cite[Corollary 2.7]{VV} the associated graded ring $\gr_I(R)$ is Cohen-Macaulay. Hence, by \cite[Remark (3.10)]{GS}, the Rees algebra $\calR(I)$ is Cohen-Macaulay. Moreover, by \cite[Appendix 5, Theorem 2$'$]{ZS} (see also \cite[Theorem 3.7]{H}), the Rees algebra $\calR(I)$ is normal. 
 
\medskip  

\noindent Now assume that $d \ge 3$ and that the assertion holds true for $d-1$. Write $I = (a_1, a_2, \ldots, a_{d-2}, \ldots, a_n)$, where $a_{1}, \ldots, a_{d-2} $  forms a part of regular system of parameters of length $d-2$. Let ${\ds R'=R[z_{1}, \ldots, z_{n}]}$ be a polynomial ring with indeterminates $z_{i}$. Set
\[ R''= R'_{\m R'}, \quad I'' = IR'', \quad \m''=\m R'', \quad x= \sum_{i=1}^{n} z_{i} a_{i} \in I''.\] Since the natural map $\calR(I) \to R'' \otimes_R\calR(I) \cong \calR(I'')$ is faithfully flat, it is enough to show that $\calR(I'')$ is a Cohen-Macaulay normal domain. 

\medskip

\noindent Since ${\ds I \not\subseteq \m^2}$,  ${\ds x \not\in (\m'')^{2}}$. Therefore, 
$(R''/(x), \m''/(x))$ is a regular local ring of dimension $d-1$, and the ideal $I''/(x)$ is $\m''/(x)$-primary. By \cite[Corollary 2.3]{HU}, 
\[ \overline{(I''/(x))} = \overline{I''}/(x) = I''/(x). \]
That is, $I''/(x)$ is integrally closed. Also, we obtain the following.
\[ \begin{array}{rcl}
{\ds \ell_{R''} \left(  \frac{I''/(x) +(\m''/(x))^2}{(\m''/(x))^2} \right)  } &=& {\ds \ell_{R''}\left( \frac{I'' + (\m'')^2}{(\m'')^2 + (x)}  \right)  } \vspace{0.1 in} \\
&=& {\ds \ell_{R''} \left( (I'' +(\m'')^{2})/(\m'')^{2}  \right) - \ell_{R''} \left(  ((\m'')^{2}+(x)  )/(\m'')^{2} \right)  } \vspace{0.1 in} \\
&=& {\ds \ell_{R''} \left( R''/(\m'')^{2}  \right) - \ell_{R''} \left( R''/(I'' +(\m'')^{2}) \right) -  \ell_{R''} \left(  ((\m'')^{2}+(x)  )/(\m'')^{2} \right) } \vspace{0.1 in} \\
&=& {\ds \ell_{R} \left(R/\m^{2}\right) - \ell_{R}\left( R/(I+\m^{2})  \right) -1} \vspace{0.1 in} \\
&=& {\ds \ell_{R} \left( (I+\m^{2}) / \m^{2} \right) - 1 } \vspace{0.1 in} \\
&\geq & {\ds d-3}
\end{array} \]
By the induction hypothesis, the Rees algebra ${\ds \calR(I''/(x))}$ is a Cohen-Macaulay normal domain. Then, by \cite[Remark (3.10)]{GS} the associated graded ring $\gr_{I''/(x)}(R''/(x))$ is Cohen-Macaulay. 

\medskip

\noindent Consider the integral closure $\overline{\calA}$ of the extended Rees algebra of $I''$ in $R''[t, t^{-1}]$: 
\[ \overline{\calA} = \sum_{n \in \Bbb Z}\overline{(I'')^n}t^n. \]
By \cite[Lemma 1.1 (c)]{HU}, the element $xt$ is regular on $ \overline{\calA}/t^{-1} \overline{\calA}$. Then the equality 
\[ \overline{(I'')^n}:_{R''}x = \overline{(I'')^{n-1}} \]
holds for all $n \ge 1$. Also, since $\calR(I''/(x))$ is normal, we have 
\[ \overline{(I''/(x))^n} = (I''/(x))^n. \]
Thus, $\overline{(I'')^n} \subseteq (I'')^n + (x)$ for all $n \ge 1$. 
Now, we prove that ${\ds \overline{(I'')^n} = (I'')^{n}}$ for all $n \geq 1$ by induction. It is trivial if $n=1$. Let $n \geq 2$ and assume that assertion holds true for $n-1$. Then
\[ \begin{array}{rcl}
{\ds \overline{(I'')^n} } &=& {\ds \big(  (I'')^n + (x) \big) \cap  \overline{(I'')^n} } \vspace{0.1 in} \\
&=& {\ds  (I'')^n + \big( (x) \cap \overline{(I'')^n} \big)   }  \vspace{0.1 in} \\
&=& {\ds  (I'')^{n} + x \big( \overline{(I'')^n}:_{R''}x  \big) } \vspace{0.1 in} \\
&=&  {\ds (I'')^{n} + x \overline{(I'')^{n-1}} } \vspace{0.1 in} \\
&=& {\ds (I'')^{n} + x (I'')^{n-1} } \vspace{0.1 in} \\
&=& {\ds (I'')^{n} }
\end{array}\]
Therefore, Rees algebra $\calR(I'')$ is normal.

\medskip

\noindent Since $xt$ is regular on ${\ds \overline{\calA}/t^{-1} \overline{\calA} = \gr_{I''}(R'')}$, it follows that 
\[
\gr_{I''}(R'')/(xt)\gr_{I''}(R'') \cong \gr_{I''/(x)}(R''/(x)).
\] Since $\gr_{I''/(x)}(R''/(x))$ is Cohen-Macaulay, $\gr_{I''}(R'')$ is Cohen-Macaulay. 
Therefore, by \cite[Theorem (5)]{L}, the Rees algebra $\calR(I'')$ is Cohen-Macaulay.
\end{proof}

The following theorem extends the theorem of Goto on complete intersection ideals (\cite[Corollary (1.3)]{G}), as well as the result of Ciuperc\u{a} on almost complete intersection ideals (\cite[Theorem 1.1]{C1}, \cite[Section 4]{C2}).

\begin{thm}\label{div2}
Let $(R, \m)$ be a regular local ring of dimension $d \ge 3$ and $I$ an integrally closed $\m$-primary ideal of $R$. If $\mu_R(I) \le d+2$, then  $I$ contains  a part of regular system of parameters of length $d-2$. In particular, the Rees algebra $\calR(I)$ is a Cohen-Macaulay normal domain.
\end{thm}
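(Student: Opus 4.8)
The plan is to deduce the statement from Theorem~\ref{main}. Since that theorem yields the Rees-algebra conclusion as soon as $I$ is known to contain a part of a regular system of parameters of length $d-2$, and since by the remark preceding Theorem~\ref{main} this is equivalent to $\ell_R((I+\m^2)/\m^2)\ge d-2$, that is, to $v(R/I)\le 2$, it suffices to prove $v(R/I)\le 2$ for every integrally closed $\m$-primary $I$ with $\mu_R(I)\le d+2$. After the faithfully flat base change making the residue field infinite, we argue by contradiction: assume $v:=v(R/I)\ge 3$. The image of $I$ in $\m/\m^2$ has dimension $d-v$, so we may pick sufficiently general $a_1,\dots,a_{d-v}\in I$ whose residues span it; these form part of a regular system of parameters and belong to a minimal generating set of $I$. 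Reducing modulo $a_1,\dots,a_{d-v}$ one element at a time, and invoking at each step the stability of integral closure under a sufficiently general hyperplane section -- this is the use of \cite[Corollary 2.3]{HU} already made in the proof of Theorem~\ref{main}, legitimate here because the $a_i$ are general -- we obtain a regular local ring $S=R/(a_1,\dots,a_{d-v})$ of dimension $v\ge 3$ and an integrally closed $\m_S$-primary ideal $\bar I=IS$ with $\bar I\subseteq\m_S^2$ and $\mu_S(\bar I)=\mu_R(I)-(d-v)\le v+2$. Thus it suffices to prove that no such $\bar I$ can exist.

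I would split this according to the deviation $\mu_S(\bar I)-v\in\{0,1,2\}$; note $\mu_S(\bar I)\ge v$ since $\bar I$ is $\m_S$-primary. If the deviation is $0$, then $\bar I$ is a complete intersection, and by \cite[Theorem (1.1)]{G} an integrally closed $\m_S$-primary complete intersection contains a part of a regular system of parameters of length $v-1\ge 2$, contradicting $\bar I\subseteq\m_S^2$. If the deviation is $1$, then $\bar I$ is an almost complete intersection, and \cite[Corollary 3.3]{C1} (see also \cite[Section 4]{C2}) forces the analogous length to be at least $v-2\ge 1$, again a contradiction. So for $\mu_R(I)\le d+1$ this reduction merely recasts the theorems of Goto and of Ciuperc\u{a}; the new content of the theorem is the remaining case, deviation $2$.

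In that case one must rule out an integrally closed $\m_S$-primary $\bar I\subseteq\m_S^2$ with $\mu_S(\bar I)=v+2$ in a regular local ring $S$ of dimension $v\ge 3$, equivalently prove $\mu_S(\bar I)\ge v+3$ for every such ideal. Writing $r:=\ord(\bar I)\ge 2$, I would aim for the sharper bound $\mu_S(\bar I)\ge\mu_S(\m_S^{\,r})\ge\mu_S(\m_S^{2})=\binom{v+1}{2}>v+2$. The route I expect to succeed: pass to a sufficiently general $2$-dimensional section, where by Lipman--Teissier (as recalled in the proof of Theorem~\ref{main}) the minimal number of generators is $\ord+1$, and then account for the minimal generators picked up at each of the intermediate general hyperplane sections, exploiting that integral closedness of an ideal of order $\ge r$ keeps its leading-form ideal large enough to make this count add up; a variant would instead run through the link $\fka:\bar I$ of a minimal reduction $\fka\subseteq\bar I$. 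Once $\mu_S(\bar I)\ge v+3$ is established we reach the desired contradiction, hence $v(R/I)\le 2$, and Theorem~\ref{main} gives that $\calR(I)$ is a Cohen-Macaulay normal domain. The hard part is exactly this deviation-two estimate: the impossibility for deviations $0$ and $1$ is already provided by Goto and by Ciuperc\u{a}, so it is the step from an almost complete intersection to an ideal with two more generators than a complete intersection that needs a genuinely new argument.
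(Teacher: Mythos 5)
Your reduction of the theorem to the statement that no integrally closed $\m_S$-primary ideal $\bar I\subseteq\m_S^2$ with $\mu_S(\bar I)\le v+2$ can exist in a regular local ring $S$ of dimension $v\ge 3$ is sound, and it essentially runs the paper's argument in reverse (the paper instead shows directly that $I\not\subseteq\m^2$ and then inducts on $d$ by cutting with an arbitrary $x\in I\setminus\m^2$, quoting \cite[Lemma 4.2]{C2} to preserve integral closedness). But at the decisive point your proof stops: you state the desired bound $\mu_S(\bar I)\ge\mu_S(\m_S^2)=\binom{v+1}{2}>v+2$ and then only sketch a route you ``expect to succeed'' (a general two-dimensional section plus Lipman--Teissier, or a linkage variant), explicitly conceding that this deviation-two estimate is the hard part requiring a genuinely new argument. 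That is a genuine gap, not a detail: nothing in the proposal proves the estimate, and the sketched route is itself shaky, since cutting by general elements of $\m$ (as opposed to generic elements of the ideal $I$, which is what \cite[Corollary 2.3]{HU} covers) does not in general preserve integral closedness, so the claim that a general $2$-dimensional section of $\bar I$ is still integrally closed with $\mu=\ord+1$ is unjustified.

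The missing idea is $\m$-fullness, and with it the estimate is two lines rather than a new argument. After making the residue field infinite, an integrally closed $\m$-primary ideal of a regular local ring is $\m$-full (\cite[Theorem (2.4)]{G}), and Watanabe's theorem (\cite[Theorem 3]{W87}) then gives $\mu_R(I)\ge\mu_R(J)$ for \emph{every} ideal $J\supseteq I$; taking $J=\m^2$ when $I\subseteq\m^2$ yields $\mu_R(I)\ge d(d+1)/2>d+2$ for $d\ge 3$, the desired contradiction. This makes your case division by deviation, and the separate appeals to Goto and Ciuperc\u{a} for deviations $0$ and $1$, unnecessary. Once $I\not\subseteq\m^2$ is known, the paper concludes by induction on $d$: choose $x\in I\setminus\m^2$, pass to $R/(x)$ (integral closedness is preserved by \cite[Lemma 4.2]{C2}, and $\mu$ drops by exactly one since $x\notin\m I$), and compare lengths modulo $\m^2$ to recover a part of a regular system of parameters of length $d-2$; the Rees-algebra conclusion then follows from Theorem~\ref{main} exactly as you say.
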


\begin{proof}
Enlarging the residue class field $R/\m$ of $R$ if necessary, we may assume $R/\m$ is infinite. Suppose that $I \subseteq \m^2$. Because $I$ is $\m$-full (\cite[Theorem (2.4)]{G}), by \cite[Theorem 3]{W87} (see also \cite[Lemma (2.2)]{G}), we obtain the following.  
\[ d+2 \ge \mu_R(I) \ge \mu_R(\m^2) =  \frac{d(d+1)}{2}. \]
After simplifying the inequality and using $d \geq 3$, we get
\[ 0 \ge d^{2}-d-4 \geq 2. \]
This is a contradiction. Therefore, $I \not\subseteq \m^2$. 

\medskip

\noindent Now we prove the theorem by induction on $d$. Let $d=3$. Since $I \not\subseteq \m^2$, the ideal $I$ contains a part of regular system of parameters of length $1$.

\medskip

\noindent Let $d \geq 4$ and assume that  the assertion holds true for $d-1$. Choose $x \in I$ with $x \not\in \m^2$. By \cite[Lemma 4.2]{C2}, we obtain 
\[ \overline{(I/(x))} = \overline{I}/(x) = I/(x). \]
Now, $R/(x)$ is a regular local ring of dimension $d-1$ and the ideal $I/(x)$ is  integrally closed and $\m/(x)$-primary.   Moreover, 
\[ \mu_{R/(x)}(I/(x)) = \mu_R(I) - 1 \le (d+2) - 1 = (d-1) + 2. \]
By the induction hypothesis,  $I/(x)$ contains  a part of regular system of parameters of length $d-3$. Equivalently, ${\ds \ell_{R} \left( (I + \m^{2})/(\m^{2} +(x))  \right) \geq d-3}$. 
Therefore,
\[ \begin{array}{rcl}
 {\ds \ell_R \left( (I+\m^2) /\m^2 \right)}  &=&  {\ds \ell_R \left( (I+\m^2)/(\m^2+(x)) \right) + \ell_R \left( (\m^2+(x))/\m^2 \right) } \vspace{0.1 in} \\
& \ge & (d-3) + 1 = d-2,
\end{array} \]
which proves that $I$ contains a part of regular system of parameters of length $d-2$.
\end{proof}

\begin{ex}\label{3.5}
Let $R=k[\![X, Y, Z]\!]$ be the formal power series ring over a field $k$ and $\m = (X, Y, Z)$  the maximal ideal of $R$. The Rees algebras of the following ideals are Cohen-Macaulay normal domains. 
\begin{itemize}
\item[$(1)$] Let $I=\overline{(X^3, Y^3, Z)} = (X^3, X^2Y, XY^2, Y^3, Z)$. Then $I$ is an integrally closed $\m$-primary ideal of $R$ and $\mu_R(I) = 5$. 
\item[$(2)$] Let $I=\overline{(X^4, Y^4, Z)} = (X^4, X^3Y, X^2Y^2, XY^3, Y^4, Z)$. Then $I$ is an integrally closed $\m$-primary ideal of $R$ and $\mu_R(I) = 6\,>d+2$, but $v(R/I) = 2$. 
\item[$(3)$] Let $I=(f) + \m^n$ for each $f \in \m\setminus \m^2$ and $n \ge 1$. Then $I$ is an integrally closed $\m$-primary ideal of $R$ and $v(R/I) \le 2$. 
\end{itemize}
\end{ex}

\begin{cor}
Let $R$ be a regular ring of dimension $d$, and let $I$ be an integrally closed ideal of $R$ with $\height_{R}(I)=d$. If $\mu_{R}(I) \le d+2$, then the Rees algebra $\calR(I)$ is a Cohen-Macaulay normal domain. 
\end{cor}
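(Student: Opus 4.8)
The plan is to derive this global statement from the local Theorems~\ref{div2} and~\ref{main} by localizing at maximal ideals, using that the Cohen--Macaulay and normal properties are local and that the formation of the Rees algebra commutes with localization. First observe that $\height_R(I)=d$ (a finite number) forces $I$ to be a proper ideal, and that for every prime $\p$ of $R$ there is a canonical isomorphism $\calR(I)\otimes_R R_\p\cong\calR(I_\p)$; moreover $\calR(I)$ is a subring of $R[t]$, hence a domain once we know $R$ is (here ``regular ring'' is taken to mean a regular domain, equivalently a connected regular ring). Thus it suffices to prove that $\calR(I_\m)$ is Cohen--Macaulay and normal for every maximal ideal $\m$ of $R$.

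Fix a maximal ideal $\m$. If $I\not\subseteq\m$, then $I_\m=R_\m$ and $\calR(I_\m)=R_\m[t]$ is a polynomial ring over a regular local ring, hence Cohen--Macaulay and normal. Now suppose $I\subseteq\m$. I would first verify that $R_\m$ is a regular local ring of dimension exactly $d$ and that $I_\m$ is $\m R_\m$-primary: every prime of $R_\m$ containing $I_\m$ has the form $\p R_\m$ with $I\subseteq\p\subseteq\m$, so
\[
\height_{R_\m}(I_\m)=\min\{\height_R\p:\ I\subseteq\p\subseteq\m\}\ge\height_R(I)=d,
\]
while $\height_{R_\m}(I_\m)\le\dim R_\m\le\dim R=d$; hence $\height_{R_\m}(I_\m)=\dim R_\m=d$, so $R_\m$ is a $d$-dimensional regular local ring and $\sqrt{I_\m}=\m R_\m$. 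Furthermore $I_\m$ is integrally closed, because the integral closure of an ideal commutes with localization, and $\mu_{R_\m}(I_\m)\le\mu_R(I)\le d+2$.

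It remains to apply the local results to $(R_\m,\m R_\m)$ and $I_\m$. If $d\ge3$, Theorem~\ref{div2} applies directly and shows that $\calR(I_\m)$ is a Cohen--Macaulay normal domain. If $d=2$, the hypothesis of Theorem~\ref{main} --- that $I_\m$ contain a part of a regular system of parameters of length $d-2=0$ --- is vacuous, so that theorem gives the same conclusion. If $d=1$, then $R_\m$ is a discrete valuation ring, the $\m R_\m$-primary ideal $I_\m$ is principal and generated by a nonzerodivisor, so $\calR(I_\m)\cong R_\m[u]$ is Cohen--Macaulay and normal; the case $d=0$ is immediate since then $I_\m=0$. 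In every case $\calR(I_\m)$ is Cohen--Macaulay and normal, which finishes the proof. The only steps needing real care are the height computation after localization --- required so that the relevant local ring has dimension precisely $d$ and Theorem~\ref{div2} applies with the correct $d$ --- and the degenerate cases $d\le2$, where one falls back on Theorem~\ref{main} or a direct valuation-ring argument; beyond this bookkeeping there is no substantial obstacle.
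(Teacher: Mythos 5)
Your proof is correct and follows essentially the same route as the paper: localize at maximal ideals (using that Rees algebras, integral closure, Cohen--Macaulayness and normality all behave well under localization) and invoke Theorem~\ref{div2} when $d\ge 3$, with the low-dimensional and degenerate cases handled directly. The only cosmetic difference is that for $d=2$ the paper cites Zariski--Samuel and Lipman--Teissier directly instead of applying Theorem~\ref{main} with its then-vacuous hypothesis, which amounts to the same argument.
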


\begin{proof}
Localizing at a maximal ideal $\m$ in $R$ containing $I$, we may assume that $(R, \m)$ is a regular local ring and that $I$ is $\m$-primary. Enlarging the residue class field $R/\m$ of $R$ if necessary, we may further assume that $R/\m$ is infinite. 
The assertion is immediate from the definition when $d \le 1$. In the case $d=2$, it follows from \cite[Appendix 5, Theorem 2$'$]{ZS} together with \cite[Proposition 5.5]{LT}. When $d\ge 3$, it is a consequence of Theorem \ref{div2}.
\end{proof}

\section{The case where $\mu_R(I) = d+3$ for monomial ideals $I$}

In this section, we investigate the normality of a particular class of ideals, namely homogeneous ideals in a polynomial ring. By leveraging the algebraic structure inherent in homogeneous ideals, we aim to characterize conditions under which they are normal. It is well known that  the integral closure of a monomial ideal is itself a monomial ideal. For a real number $r$, we denote ${\ds \lceil r \rceil= \min\left\{ n \in \Z \mid r \leq n  \right\}}$.

\begin{lem}\label{dim2mono}
Let ${\ds R=k[X, Z]}$ be a polynomial ring over a field $k$. Let ${\ds J=(X^{2}, XZ^{a}, Z^{c})}$ be an $R$-ideal, where $a \geq 1$ and $c \geq 2$ are integers such that ${\ds a \leq  \left\lceil \frac{c}{2} \right\rceil}$.  Then $J$ is normal.
\end{lem}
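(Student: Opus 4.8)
The plan is to reduce the normality of $J$ to the single assertion that $J$ is integrally closed. Granting $\overline{J}=J$, it follows by induction on $n$ from Zariski's theorem that a product of two integrally closed ideals in a two-variable polynomial ring is again integrally closed (\cite[Part II, Section 12]{Z}; see also \cite[Appendix 5, Theorem 2$'$]{ZS}), applied to $J^{n}=J\cdot J^{n-1}$, that $J^{n}=\overline{J^{n}}$ for all $n\geq 1$; that is, $J$ is normal. (One could equally note that $\sqrt{J}=(X,Z)$, so $JR_{\m}$ is an integrally closed ideal primary to the maximal ideal of the two-dimensional regular local ring $R_{\m}$, and invoke Theorem~\ref{main}; since $\calR(J)_{\p}$ is a polynomial ring over $R_{\p}$ for every prime $\p$ not containing $\m=(X,Z)$, normality of $\calR(J)$ would follow.) So it remains to prove $\overline{J}=J$.

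Since $J$ is a monomial ideal, so is $\overline{J}$, and $\overline{J}$ is spanned by the monomials $X^{i}Z^{j}$ whose exponent $(i,j)$ lies in the Newton polyhedron $\mathrm{NP}(J)$, the convex hull of $\{(2,0),(1,a),(0,c)\}+\mathbb{R}_{\geq 0}^{2}$. I would first record the inequality description of $\mathrm{NP}(J)$, distinguishing the case $a\leq c/2$, where $(1,a)$ is a vertex and $\mathrm{NP}(J)=\{(x,y)\in\mathbb{R}_{\geq 0}^{2}\mid ax+y\geq 2a,\ (c-a)x+y\geq c\}$, from the case $a=\lceil c/2\rceil>c/2$ (which forces $c$ odd), where the three exponents are collinear and $\mathrm{NP}(J)=\{(x,y)\in\mathbb{R}_{\geq 0}^{2}\mid cx+2y\geq 2c\}$. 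One then checks that every lattice point $(i,j)\in\mathrm{NP}(J)$ yields a monomial of $J$: if $i\geq 2$ then $X^{i}Z^{j}\in(X^{2})$; if $i=0$ the defining inequalities force $j\geq c$, so $X^{i}Z^{j}\in(Z^{c})$; and if $i=1$ the relevant inequality forces $j\geq a$ (directly when $a\leq c/2$, and from $cx+2y\geq 2c$ at $x=1$ together with $a=\lceil c/2\rceil$ in the other case), so $XZ^{j}\in(XZ^{a})$. Hence $\overline{J}\subseteq J$, and therefore $\overline{J}=J$.

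The only genuine bookkeeping is the case analysis comparing $a$ with $c/2$, in particular the borderline value $a=\lceil c/2\rceil$ with $c$ odd: there the generator $XZ^{a}$ lies on the boundary segment of $\mathrm{NP}(J)$ rather than at a vertex, and this is exactly where the hypothesis $a\leq\lceil c/2\rceil$ (and not merely $a\leq c/2$) is used. The bound is sharp, since for $a=\lceil c/2\rceil+1$ one has $XZ^{\lceil c/2\rceil}\in\overline{J}\setminus J$.
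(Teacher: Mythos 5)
Your reduction of normality to integral closedness via Zariski's product theorem is exactly the paper's first step; the difference lies in how $\overline{J}=J$ is verified. The paper argues directly from an explicit equation of integral dependence: for a monomial $XZ^{\beta}$ with $1\leq \beta\leq a-1$ it writes $(XZ^{\beta})^{\rho}=h\,(X^{2})^{n_{1}}(XZ^{a})^{n_{2}}(Z^{c})^{n_{3}}$ and deduces from the exponent inequalities (using $a-\beta>0$ and $c-2\beta>0$, the latter being precisely where $a\leq\lceil c/2\rceil$ enters) that $n_{1}=n_{2}=0$, hence $\beta\geq c$, which rules this case out; the remaining monomials are handled by inspection, just as in your cases $i\geq 2$, $i=0$, and $i=1$ with $j\geq a$. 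You instead invoke the standard Newton-polyhedron description of the integral closure of a monomial ideal and check that every lattice point of $\mathrm{NP}(J)$ is already an exponent of a monomial in $J$, with the borderline case $a=\lceil c/2\rceil$, $c$ odd, absorbed by integrality of $j$ in $c+2j\geq 2c$. This is correct and somewhat cleaner, at the price of citing the Newton-polyhedron fact, which the paper's hands-on computation avoids (the paper's inequalities are essentially a self-contained derivation of the same facet conditions). One harmless slip: when $c$ is odd and $a=\lceil c/2\rceil$, the exponents $(2,0)$, $(1,a)$, $(0,c)$ are not collinear --- $(1,a)$ lies strictly above the segment joining $(2,0)$ and $(0,c)$ --- but it still satisfies $cx+2y\geq 2c$, so it is not a vertex and your stated description of $\mathrm{NP}(J)$, and everything that follows, is unaffected. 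Your sharpness remark parallels the opening of the paper's proof of Theorem~\ref{dim3mono}, where $YZ^{\lceil c/2\rceil}$ is shown to be integral over, but not contained in, the ideal when the analogous bound fails.
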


\begin{proof} It is enough to show that $J$ is integrally closed (\cite[Part II, Section 12]{Z}). Let ${\ds f = X^{\alpha}Z^{\beta} }$ be a monomial in the integral closure ${\ds \overline{J}}$, where $\alpha, \beta \geq 0$. We show that $f \in J$ by cases.

\medskip

\noindent Suppose that $\alpha =0$ or $\beta =0$. Then 
\[ f = Z^{\beta} \in \overline{\left(Z^{c}\right)} = \left(Z^{c}\right) \subseteq J, \quad \mbox{or} \quad f = X^{\alpha} \in \overline{\left(X^{2}\right)} = \left(X^{2}\right) \subseteq J. \]

\medskip

\noindent Suppose that $\alpha \geq 2$. Then
\[ f= X^{\alpha}Z^{\beta} = X^{2} \cdot X^{\alpha -2} Z^{\beta} \in J. \]

\medskip

\noindent Suppose that $\alpha =1$ and $\beta \geq a$. Then
\[ f= X Z^{\beta} = XZ^{a} \cdot Z^{\beta - a} \in J. \]

\medskip

\noindent Suppose that $\alpha =1$ and $ 1 \leq \beta \leq a-1$. Then there exists positive integer $\rho$ such that 
\[ \left( XZ^{\beta} \right)^{\rho} = h \cdot \left(X^{2}\right)^{n_{1}}\left(XZ^{a}\right)^{n_{2}}\left(Z^{c}\right)^{n_{3}}, \]
where $h$ is a monomial in $R$, and $n_{1}, n_{2}, n_{3}$ are nonnegative integers such that $n_{1}+n_{2}+n_{3} = \rho$.
Therefore,
\[ X^{\rho}Z^{\beta \rho}  = h \cdot X^{2n_{1}+n_{2}}  Z^{an_{2}+cn_{3}}.   \]
Then
\[ \rho \geq 2n_{1}+n_{2} \;\; \Rightarrow \;\; n_{1}+n_{2}+n_{3} \geq 2n_{1}+n_{2} \;\; \Rightarrow \;\; n_{3} \geq n_{1}.\]
Also,
\[  \beta \rho \geq an_{2}+cn_{3} \;\; \Rightarrow \;\;  \beta (n_{1}+n_{2}+n_{3}) \geq an_{2}+cn_{3} \] This means that
\[ 0 \geq -\beta n_{1} + (a - \beta) n_{2} + (c- \beta)n_{3} \geq (c- 2 \beta)n_{1} + (a - \beta)n_{2}. \] By assumption, ${\ds a- \beta >0}$. Also, Since  ${\ds a \leq  \left\lceil \frac{c}{2} \right\rceil}$, we get 
\[ \beta \leq  a -1 < \frac{c}{2}  \;\; \Rightarrow \;\;  c-2 \beta  > 0. \] Therefore $n_{1}=0=n_{2}$. This means that $\rho=n_{3}$ and $\beta \rho \geq c n_{3}$. Then $\beta \geq c$. Therefore,
\[ f = XZ^{\beta} = Z^{c}\cdot XZ^{\beta - c} \in J. \qedhere\]
\end{proof}

\medskip

In the following theorem, we consider a very specific monomial ideal in a polynomial ring with three variables. Although this case may appear highly specialized, it serves as a key building block for the more general results established later. Analyzing this ideal in detail allows us to identify structural properties and techniques that will be instrumental in extending our arguments to broader classes of homogeneous ideals.

\begin{thm}\label{dim3mono}
Let ${\ds R=k[X, Y, Z]}$ be a polynomial ring over a field $k$. Let 
\[ I= (X^{2},\, XY,\, Y^{2},\, Z^{c},\, XZ^{a},\, YZ^{b}),\]
where $a, b, c$ are integers such that ${\ds 1 \leq a \leq b}$ and  $c \geq 2$. Suppose that $I$ is integrally closed. Then ${\ds b \leq  \left\lceil \frac{c}{2} \right\rceil}$ and $I$ is normal.
\end{thm}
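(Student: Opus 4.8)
The plan is to prove the two assertions separately, using throughout that $I$ is a monomial ideal. First I would prove the numerical bound $b\le\lceil c/2\rceil$ by exhibiting a monomial in $\overline I\setminus I$ whenever it fails; then, with $1\le a\le b\le\lceil c/2\rceil$ available, I would prove normality by checking $\overline{I^{n}}=I^{n}$ for every $n\ge 1$ directly from explicit descriptions of $I^{n}$ and $\overline{I^{n}}$.

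For the bound, suppose $b>\lceil c/2\rceil$, i.e.\ $2b\ge c+2$, and set $\gamma=\min\{b-1,\,c-1\}$. One checks $\gamma\ge\lceil c/2\rceil$: if $b\le c$ this is the assumption $b-1\ge\lceil c/2\rceil$, and if $b>c$ it reads $c-1\ge\lceil c/2\rceil$, which is valid since $c\ge 2$. Hence $\gamma\ge 1$ and $2\gamma\ge c$, so $Z^{2\gamma}\in(Z^{c})\subseteq I$ while $Y^{2}\in I$, giving $(YZ^{\gamma})^{2}=Y^{2}Z^{2\gamma}\in I^{2}$. Thus $YZ^{\gamma}$ is integral over $I$, so $YZ^{\gamma}\in\overline I=I$. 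But $\gamma<b$ and $\gamma<c$ and $YZ^{\gamma}$ involves no $X$, so it is divisible by none of $X^{2},XY,Y^{2},Z^{c},XZ^{a},YZ^{b}$; hence $YZ^{\gamma}\notin I$, a contradiction. Therefore $b\le\lceil c/2\rceil$, and then also $a\le b\le\lceil c/2\rceil$, so $2a\le c+1$ and $2b\le c+1$. (As a sanity check note that setting $Y=0$, resp.\ $X=0$, sends $I$ to $(X^{2},XZ^{a},Z^{c})$, resp.\ $(Y^{2},YZ^{b},Z^{c})$, which is normal by Lemma~\ref{dim2mono}; but this alone does not control $\overline{I^{n}}$, so I would fall back on a direct computation.)

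For normality, recall that $\overline{I^{n}}$ is the monomial ideal spanned by the lattice points of $n\,\mathrm{N}(I)$, where $\mathrm{N}(I)=\mathrm{conv}\{(2,0,0),(0,2,0),(0,0,c),(1,0,a),(0,1,b)\}+\mathbb R^{3}_{\ge 0}$ is the Newton polyhedron of $I$ (the exponent of $XY$ being redundant). Thus $X^{i}Y^{j}Z^{k}\in\overline{I^{n}}$ iff there are reals $\mu_{1},\dots,\mu_{5}\ge 0$ with $\mu_{1}+\cdots+\mu_{5}=n$, $2\mu_{1}+\mu_{4}\le i$, $2\mu_{2}+\mu_{5}\le j$, $c\mu_{3}+a\mu_{4}+b\mu_{5}\le k$. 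On the other hand, multiplying out $n$ generators of $I$ and recording how many come from $X^{2},XY,Y^{2}$ (say $p$) and how many are $XZ^{a},YZ^{b},Z^{c}$ (say $q,r,s$) shows that $X^{i}Y^{j}Z^{k}\in I^{n}$ iff there are integers $p,q,r,s\ge 0$ with $p+q+r+s=n$, $q\le i$, $r\le j$, $2p+q+r\le i+j$, $aq+br+cs\le k$. Since $I^{n}\subseteq\overline{I^{n}}$ always holds, it remains to deduce the integer system from the real one. Given $\mu_{1},\dots,\mu_{5}$ as above, the tuple $(q,r,s,p)=(\mu_{4},\mu_{5},\mu_{3},\mu_{1}+\mu_{2})$ is a real solution of the integer system, so the rational polytope
\[
P=\{(q,r,s)\in\mathbb R^{3}_{\ge 0}:q\le i,\ r\le j,\ q+r+s\le n,\ q+r+2s\ge 2n-i-j,\ aq+br+cs\le k\}
\]
is nonempty, and what must be shown is that $P$ contains a lattice point (then $p:=n-q-r-s\ge 0$ completes the tuple).

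If $2n-i-j\le 0$ this is clear since $(0,0,0)\in P$; so assume $2n-i-j\ge 1$. The only constraint of $P$ with non‑unit coefficients is $aq+br+cs\le k$, and the hypothesis $a\le b\le\lceil c/2\rceil$ is used precisely to keep $a,b$ close to $c/2$: the integer shift $s\mapsto s-1,\ r\mapsto r+2$ (resp.\ $q\mapsto q+2$) leaves $q+r+2s$ unchanged and alters $aq+br+cs$ by $2b-c\le 1$ (resp.\ $2a-c\le 1$), so the rounding errors in the three soft coordinates can be compensated against the slack that integrality of $i,j,k,n$ provides in $2p+q+r\le i+j$ and $aq+br+cs\le k$. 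I would carry this out by a finite case analysis on the fractional parts of $\mu_{3},\mu_{4},\mu_{5}$ (equivalently, on $i+j\bmod 2$ and the residue of $k$); the degenerate case $c\in\{2a-1,2a\}$ forces $a=b$, where $I$ equals $(X,Y,Z^{a})^{2}$ or $(X,Y,Z^{a})^{2}+(Z^{2a-1})$ and is handled directly from the Newton polyhedron. The main obstacle is exactly this lattice‑point step: verifying that the nonempty polytope $P$ always meets $\mathbb Z^{3}$, the genuinely delicate part being to track the simultaneous effect of rounding on the upper bound $q+r+s\le n$ and the lower bound $q+r+2s\ge 2n-i-j$ while respecting $aq+br+cs\le k$.
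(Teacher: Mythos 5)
Your proof of the first assertion is fine and is essentially the paper's argument: assuming $b>\lceil c/2\rceil$ you exhibit a monomial $YZ^{\gamma}$ with $(YZ^{\gamma})^{2}\in I^{2}$ but $YZ^{\gamma}\notin I$, contradicting $I=\overline{I}$ (the paper takes $\gamma=\lceil c/2\rceil$, you take $\gamma=\min\{b-1,c-1\}$; both work). The setup for the second assertion is also sound as far as it goes: your descriptions of membership in $\overline{I^{n}}$ via the Newton polyhedron and of membership in $I^{n}$ via the integer system in $(p,q,r,s)$ are correct (the elimination of the split of $p$ into the multiplicities of $X^{2},XY,Y^{2}$ is legitimate).

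However, the heart of the normality statement is precisely the step you leave open: showing that whenever the rational polytope $P$ is nonempty it contains a lattice point. You acknowledge this yourself (``the main obstacle is exactly this lattice-point step''), and the sketch offered --- compensating rounding errors by the shifts $s\mapsto s-1,\ r\mapsto r+2$ using $2b-c\le 1$, organized by ``a finite case analysis on the fractional parts of $\mu_{3},\mu_{4},\mu_{5}$'' --- is not carried out and is not obviously finite or well-posed as stated (the fractional parts are arbitrary reals, and ``the residue of $k$'' is left unspecified); one must simultaneously respect the upper bound $q+r+s\le n$, the lower bound $q+r+2s\ge 2n-i-j$, and $aq+br+cs\le k$, and no actual verification is given. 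So as written this is a plan, not a proof, and the genuinely delicate content of the theorem is missing. Note also that you are attacking all powers $I^{n}$ at once, which is much more than necessary: the paper invokes \cite[Proposition 3.1]{RRV}, which for a monomial ideal in three variables reduces normality to the single equality $\overline{I^{2}}=I^{2}$, and then verifies that equality by a finite case analysis on monomials $X^{\alpha}Y^{\beta}Z^{\gamma}\in\overline{I^{2}}$, using Lemma~\ref{dim2mono} for the two-variable cases and monomial specializations $R\to k[t]$ to force the needed lower bounds on $\gamma$. If you want to complete your route, either carry out the lattice-point argument in full (including the boundary cases where $2a-c$ or $2b-c$ equals $1$), or adopt the reduction to $I^{2}$, which turns the problem into a finite check.
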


\begin{proof} Let ${\ds d=\left\lceil \frac{c}{2} \right\rceil}$ and suppose that ${\ds b > d  }$. Let ${\ds f = YZ^{d}}$. Then $f$ cannot be a multiple of $YZ^{b}$. 
Note that 
\[ d < \frac{c}{2} + 1 \leq c,\]
where the last inequality follows from $c \geq 2$. Then $f$ cannot be a multiple of $Z^{c}$ either. Therefore, ${\ds f \notin I}$. However, since ${\ds c \leq 2d}$, we get
\[ Z^{2d} = Z^{c} \cdot Z^{2d-c} \in I.\]
Then
\[ \left(YZ^{d}\right)^{2} - Y^{2} \cdot Z^{2d} =0, \]
where ${\ds Y^{2} \cdot Z^{2d} \in I^{2} }$. Thus, ${\ds f \in \overline{I} }$. This is a contradiction.

\medskip

\noindent To prove $I$ is normal, by \cite[Proposition 3.1]{RRV}, it is enough to prove that $I^{2}$ is integrally closed. Let ${\ds f = X^{\alpha}Y^{\beta}Z^{\gamma} }$ be a monomial in the integral closure ${\ds \overline{I^{2}}}$, where $\alpha, \beta, \gamma \geq 0$. We show that $f \in I^{2}$ by cases.

\medskip

\noindent {\it Case 1.} Suppose that $\alpha=0$ or $\beta =0$. First, we consider the case where $\beta =0$.  Note that, since ${\ds a \leq b}$ and ${\ds b \leq  \left\lceil \frac{c}{2} \right\rceil}$ , we have  ${\ds a \leq  \left\lceil \frac{c}{2} \right\rceil}$. By Lemma~\ref{dim2mono}, we obtain
\[ f = X^{\alpha}Z^{\gamma} \in \overline{ \left(X^{2}, XZ^{a}, Z^{c}  \right)^{2} } = \left(X^{2}, XZ^{a}, Z^{c}  \right)^{2} \subseteq I^{2}. \] Similarly, if $\alpha =0$, then ${\ds f \in I^{2}}$.

\medskip

\noindent {\it Case 2.} Suppose that $\alpha \geq 2$ and $\beta \geq 2$. Then
\[ f = X^{\alpha}Y^{\beta}Z^{\gamma} = X^{2} \cdot Y^{2} \cdot X^{\alpha-2}Y^{\beta -2}Z^{\gamma} \in I^{2}. \]

\medskip

\noindent {\it Case 3.} Suppose that $\alpha =1$ and $\beta \geq 3$. Then
\[ f = XY^{\beta}Z^{\gamma}= XY \cdot Y^{2} \cdot Y^{\beta -3}Z^{\gamma} \in I^{2}. \] Similarly, if $\beta=1$ and $\alpha \geq 3$, then ${\ds f \in I^{2}}$.

\medskip

\noindent {\it Case 4.} Suppose that ${\ds f= XYZ^{\gamma}}$ or ${\ds f= XY^{2}Z^{\gamma}}$ or ${\ds f= X^{2}YZ^{\gamma}}$. Let ${\ds \varphi: R=k[X, Y, Z] \rightarrow S=k[t]}$ be an arbitrary $k$-algebra map, where $k[t]$ is a polynomial ring. Since $S$ is a PID, we obatin,
\[ \varphi(f) \in \varphi (\overline{I^{2}}) \subseteq \overline{ \varphi(I^{2})S} = \varphi(I^{2})S.  \]
Note that ${\ds 2a-1 \leq 2b-1 \leq c}$. 

\medskip

{\it Case 4-1.} Suppose that $a=b$ and $c \geq 2a$. Let  ${\ds \varphi: R \rightarrow S=k[t]}$ be a $k$-algebra map given by ${\ds \varphi(X)=t^{a}}$, ${\ds \varphi(Y) = t^{a}}$, and ${\ds \varphi(Z)=t}$. Then since $c \geq 2a$, we get
\[ \varphi(I)S = (t^{2a},\, t^{c}) = (t^{2a}) \quad \mbox{and} \quad \varphi(I^{2})S = (t^{4a}).  \]
If ${\ds f= XYZ^{\gamma}}$, then ${\ds \varphi(f) = t^{2a+\gamma} \in (t^{4a})}$. Thus, ${\ds 2a + \gamma \geq 4a}$, which means ${\ds \gamma  \geq 2a}$. Therefore,
\[ f = XYZ^{\gamma} = XZ^{a} \cdot YZ^{a} \cdot Z^{\gamma - 2a} =  XZ^{a} \cdot YZ^{b} \cdot Z^{\gamma - 2a} \in I^{2}. \]
If ${\ds f= XY^{2}Z^{\gamma}}$ or ${\ds f= X^{2}YZ^{\gamma}}$, then ${\ds \varphi(f) = t^{3a+\gamma} \in (t^{4a})}$, which means ${\ds \gamma  \geq a}$.
Therefore,
\[ f = XY^{2}Z^{\gamma} = XZ^{a} \cdot Y^{2} \cdot Z^{\gamma - a} \in I^{2} \quad \mbox{or} \quad  f = X^{2}YZ^{\gamma} = XZ^{a} \cdot XY \cdot Z^{\gamma - a}    \in I^{2}. \] 

\medskip

{\it Case 4-2.} Suppose that $a=b$ and $c = 2a-1$. Let  ${\ds \varphi: R \rightarrow S=k[t]}$ be a $k$-algebra map given by ${\ds \varphi(X)=t^{c}}$, ${\ds \varphi(Y) = t^{c}}$, and ${\ds \varphi(Z)=t^{2}}$. Then since $2c < c+2a$, we get  
\[ \varphi(I)S = (t^{2c},\, t^{c+2a}) = (t^{2c}) \quad \mbox{and} \quad \varphi(I^{2})S = (t^{4c}).  \]
If ${\ds f= XYZ^{\gamma}}$, then ${\ds \varphi(f) = t^{2c+2\gamma} \in (t^{4c})}$. Thus, ${\ds 2c + 2\gamma \geq 4c}$, which means ${\ds \gamma  \geq c}$. Therefore,
\[ f = XYZ^{\gamma} = XY \cdot Z^{c} \cdot Z^{\gamma - c}  \in I^{2}. \]
If ${\ds f= XY^{2}Z^{\gamma}}$ or ${\ds f= X^{2}YZ^{\gamma}}$, then ${\ds \varphi(f) = t^{3c+2\gamma} \in (t^{4c})}$. Thus, ${\ds 3c + 2\gamma \geq 4c}$, which means ${\ds \gamma  \geq a}$. Therefore,
\[ f = XY^{2}Z^{\gamma} = XZ^{a} \cdot Y^{2} \cdot Z^{\gamma - a} \in I^{2} \quad \mbox{or} \quad  f = X^{2}YZ^{\gamma} = XZ^{a} \cdot XY \cdot Z^{\gamma - a}  \in I^{2}. \]

\medskip

{\it Case 4-3.} Suppose that $a<b$. If ${\ds f= XYZ^{\gamma}}$, then we consider the $k$-algebra map ${\ds \varphi: R \rightarrow S=k[t]}$ given by ${\ds \varphi(X)=t^{q}}$, ${\ds \varphi(Y) = t^{c}}$, and ${\ds \varphi(Z)=t^{2}}$, where ${\ds q= 2(b-a)+c-1 }$. Then 
\[ \varphi(I)S= ( t^{2q},\, t^{q+c},\, t^{2c},\, t^{q+2a},\, t^{c+2b}).\]
Note that ${\ds q \geq c+1}$. Then
\[ 2q \geq 2c+2 > 2c \quad \mbox{and} \quad q+c \geq 2c+1 > 2c.\]
On the other hand,
\[ q+2a = 2b-1+c \leq 2c.\]
Moreover, we have 
\[ c+2b - (q+2a) = c+2b - 2(b-a)-c+1 - 2a = 1, \] which shows that 
${\ds q+2a <  c+2b}$. Therefore,
\[ \varphi(I)S= (t^{q+2a}) \quad \mbox{and} \quad \varphi(I^{2})S= (t^{2q+4a}) \]
Then ${\ds \varphi(f) = t^{q+c+2\gamma} \in (t^{2q+4a})  }$. Thus, ${\ds q+c+2 \gamma \geq 2q+4a}$, which means ${\ds \gamma \geq a+b }$. Therefore,
\[ f= XYZ^{\gamma} = XZ^{a} \cdot YZ^{b} \cdot Z^{\gamma - a- b} \in I^{2}. \]
Finally, we consider 
the $k$-algebra map ${\ds \varphi: R \rightarrow S=k[t]}$ given by ${\ds \varphi(X)=t^{a}}$, ${\ds \varphi(Y) = t^{a}}$, and ${\ds \varphi(Z)=t}$. Then 
\[ \varphi(I)S= ( t^{2a},\, t^{c},\, t^{a+b}).\]
Note that ${\ds 2a < a+b \leq 2b-1 \leq c}$. Thus,
\[ \varphi(I)S= (t^{2a}), \quad \mbox{and} \quad \varphi(I^{2})S= (t^{4a}).\]
If ${\ds f = XY^{2}Z^{\gamma}}$ or $f = X^{2}YZ^{\gamma}$, then ${\ds \varphi(f) = t^{3a+\gamma} \in (t^{4a})}$. Thus,  ${\ds \gamma  \geq a}$. Hence,
\[ f=XY^{2}Z^{\gamma} = XZ^{a} \cdot Y^{2} \cdot Z^{\gamma - a} \in I^{2}  \quad \mbox{and} \quad f=X^{2}YZ^{\gamma} = XZ^{a} \cdot XY \cdot Z^{\gamma -a} \in I^{2}. \qedhere \]
\end{proof}

\medskip

Before addressing  the normality of more general homogeneous ideals, we first establish the following lemma. It gives a precise characterization of homogenous ideals that do not contain a quadratic form of rank three. This characterization will serve as a key tool in our subsequent analysis, providing a foundation for the results on normality that follow.

\begin{lem}\label{grade2}
Let $k$ be an algebraically closed field, $R=k[X, Y, Z]$ a polynomial ring over $k$, and $\m$ the homogeneous maximal ideal of $R$. Let $I$ be an integrally closed $\m$-primary ideal such that $I \subseteq \m^{2}$ and $\mu_{R}(I)=6$. Then ${\ds \grade(I_{2}R) \geq 2}$, where $I_{2}$ is the set of all quadratic forms in $I$.
\end{lem}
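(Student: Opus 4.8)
I want to show that the ideal $I_2R$ generated by the quadratic forms in $I$ has grade at least $2$, equivalently (since $R$ is a three-dimensional polynomial ring) that $V(I_2R) \subseteq \Spec R$ has codimension at least $2$, i.e. no prime of height $\le 1$ contains $I_2$. Since $R$ is a UFD, the worrisome case is that $I_2R$ is contained in a height-one prime, i.e.\ that all quadratic forms in $I$ share a common linear factor $\ell$ (the only homogeneous primes of height one are principal, generated by irreducible forms, and a form of degree $\le 2$ dividing a quadric is linear). So the plan is: assume for contradiction that there is a linear form $\ell$ dividing every element of $I_2$, and derive a contradiction with $\mu_R(I) = 6$, $I \subseteq \m^2$, and $I$ integrally closed.

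**Reducing to a normal form.** After a linear change of coordinates I may assume $\ell = X$. Then $I_2 \subseteq X\m$, so every minimal quadratic generator of $I$ is of the form $X\cdot(\text{linear})$. Because $I \subseteq \m^2$ is $\m$-primary and $\mu_R(I) = 6$, I would count generators by degree: let $I = I_2 \oplus (\text{higher-degree minimal generators})$ as a graded module over the generators. The key point is that $\overline{I}$ being integrally closed forces $I$ to be "closed under taking integral elements," and I want to exploit that a monomial-type obstruction appears. Concretely, since the quadratic part lies in $X R_1$, write the quadratic generators as $X\ell_1, \dots, X\ell_s$ with $\ell_i$ linearly independent linear forms, so $s \le 3$; I would then show $s \le 2$ is impossible for an $\m$-primary integrally closed ideal with only $6$ generators — and $s=3$ would mean $I_2 = X\m$, whence (using that $I$ is $\m$-primary and the standard fact that an $\m$-primary ideal contained in $\m^2$ with $\mu = 6$ in dimension $3$ is quite constrained) the remaining $6 - 3 = 3$ generators sit in degree $\ge 3$; then I would look for an element of $\overline{I}\setminus I$, e.g.\ a suitable power of $Y$ or $Z$ times $X$, contradicting that $I$ is integrally closed. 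The cleanest route is probably: the presence of $X Y, XZ$ (or $X$ times a full basis of $R_1$) in $I$, together with $X^2 \notin I$ or its analysis, pins down enough monomials that integral closedness of $I$ forces a seventh minimal generator.

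**The main obstacle.** The delicate part is handling the case where $I_2$ spans only a $1$- or $2$-dimensional subspace of $X R_1$ (so $I_2R \subseteq (X)$ as an ideal but $I_2$ is "small"), and ruling it out using only $\mu_R(I) = 6$ and integral closedness: I need to show that then $I$ would require more than $6$ generators, or would fail to be integrally closed. I expect to argue that if $I_2$ has $k$-dimension $\le 2$, then $I$ must contain many cubic generators to be $\m$-primary, pushing $\mu_R(I) > 6$; quantitatively, in degree $3$ the space $R_3$ has dimension $10$, the products $\m\cdot I_2$ contribute at most $\dim(\m I_2)$ to $I_3$, and $\mu$ in degree $3$ is $\dim(I_3/\m I_2)$, so a small $I_2$ forces a large degree-$3$ contribution. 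I would make this precise with a short Hilbert-function count. The other place needing care is confirming that "a common linear factor of all quadrics" is genuinely the only way $\grade(I_2R) \le 1$ can fail — this uses that $R$ is a three-variable polynomial ring over an algebraically closed field, so a height-one prime containing $I_2$ is principal generated by an irreducible form $g$, and $g \mid q$ for a nonzero quadric $q$ forces $\deg g \le 2$; if $\deg g = 2$ then $I_2 = kg$ is one-dimensional, which I fold into the "small $I_2$" case, and if $\deg g = 1$ we are in the common-linear-factor situation above.

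**Assembling the proof.** Putting it together: suppose $\grade(I_2R) \le 1$. Then some irreducible homogeneous $g$ of degree $\le 2$ divides every element of $I_2$, so $\dim_k I_2 \le 3$ with equality only if $g$ is linear and $I_2 = g\,R_1$. In every case $\dim_k I_2 \le 3$. A Hilbert-function computation in degrees $2$ and $3$ (using $\dim_k R_2 = 6$, $\dim_k R_3 = 10$, $\m$-primarity of $I$, and $I \subseteq \m^2$) then shows $\mu_R(I) \ge \dim_k(I_2) + \dim_k(I_3/\m I_2) \ge 7$ unless $I_2$ already spans a generic-enough subspace — and the genericity needed is exactly $\grade(I_2R) \ge 2$, giving the contradiction. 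Finally, in the residual borderline configurations (where the count gives exactly $6$), I invoke that $I$ is integrally closed: in those configurations one exhibits an explicit monomial or form in $\overline{I}\setminus I$ — built from the common factor $g$ together with a high power of a complementary variable, using an integral dependence relation of the shape $t^2 - (\text{element of }I^2) = 0$ exactly as in the proof of Theorem~\ref{dim3mono} — contradicting $I = \overline{I}$. This completes the argument that $\grade(I_2R)\ge 2$.
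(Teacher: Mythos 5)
Your opening reduction is the same as the paper's: since $R$ is a UFD, a failure of $\grade(I_2R)\ge 2$ means either $I_2=0$ or all quadrics in $I$ are divisible by a fixed irreducible form $g$ of degree $\le 2$, and (after handling the irreducible-quadric possibility) one is in the common-linear-factor situation $I\subseteq g\m+\m^3$. The problem is what comes next. Your plan is to force a contradiction with $\mu_R(I)=6$ by a Hilbert-function count in degrees $2$ and $3$, namely $\mu_R(I)\ge \dim_k I_2+\dim_k(I_3/R_1I_2)$, but this count cannot reach $7$ on its own: nothing prevents the minimal generators from hiding in higher degrees, so $\m$-primarity plus $\dim_k I_2\le 3$ gives no useful lower bound on $\dim_k I_3$. (Compare $(X^3,Y^3,Z^3)\subseteq\m^3$, which is $\m$-primary with only $3$ generators; of course it is not integrally closed, but that is exactly the point — integral closedness must enter the count, and your write-up does not say how.) You acknowledge this by deferring to ``residual borderline configurations'' in which one should exhibit an element of $\overline{I}\setminus I$ via a relation $t^2-(\text{element of }I^2)=0$, but these configurations are never identified, no such element is produced, and it is not clear this can be done uniformly; as it stands the role of integral closedness is a placeholder, so the proof is incomplete.

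The paper closes this gap with a different and much more robust use of the hypothesis: an integrally closed ideal in a regular local ring with infinite residue field is $\m$-full (Goto), and by Watanabe's theorem an $\m$-full ideal $I$ satisfies $\mu(J)\le\mu(I)$ for \emph{every} ideal $J\supseteq I$. Applying this to $J=\m^3$ when $I_2=0$, and to $J=g\m+\m^3$ with $g$ linear in the common-factor case, gives $\mu_R(I)\ge 10$, respectively $\mu_R(I)\ge 7$ (and $\ge 8$ for $J=(g)+\m^3$ with $g$ an irreducible quadric), contradicting $\mu_R(I)=6$ with no degree-by-degree analysis at all. If you want to salvage your approach, replace the degree-$2$/$3$ count and the unspecified borderline analysis by this $\m$-fullness comparison with the larger ideal $g\m+\m^3$; without some such mechanism transferring the generator bound from the larger ideal down to $I$, the argument does not go through.
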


\begin{proof}
Suppose ${\ds \grade(I_{2}R)=0}$. Then, since $R$ is an integral domain,  $I_{2}R=(0)$. Therefore, ${\ds I \subseteq \m^{3}}$. Because $I_{\m}$ is $\m R_{\m}$--full, we obtain the following:
\[ 6 \geq \mu_{R_{\m}}(I_{\m}) \geq \mu_{R_{\m}}(\m^{3}R_{\m}) = 10, \]
which is a contradiction.

\medskip

\noindent Suppose ${\ds \grade(I_{2}R)=1}$. Then there exists ${\ds \p \in \Spec(R)}$ such that $\height(\p)=1$ and $I_{2}R \subseteq \p$. Since $R$ is a UFD, $\p$ is a principal ideal, say $\p=(g)$ for some nonzero nonunit $g \in R$. Since $k$ is algebraically closed, ${\ds g= g_{1} g_{2} \cdots g_{l}}$, where each $g_{i}$ is an irreducible polynomial of degree $1$ in $R$. Since $\p$ is a prime ideal, $g_{t} \in \p$ for some $t=1, \ldots, l$. Then ${\ds (0) \subsetneq (g_{t}) \subseteq \p}$ and ${\ds (g_{t}) \in \Spec(R)}$. Since $\height(\p)=1$, we get ${\ds \p=(g_{t})}$.  We denote the set of all quadratic forms in $R$ by $R_{2}$. Then
\[ I \subseteq I_{2}R \cap R_{2} + \m^{3} \subseteq \p \cap R_{2} + \m^{3} \subseteq g_{t} \m + \m^{3}. \]
Since $I_{\m}$ is $\m R_{\m}$-full, we obtain the following:
\[ 6 \geq \mu_{R_{\m}} (I_{\m}) \geq \mu_{R_{\m}} \left(  \left( g_{t} \m + \m^{3} \right)R_{\m} \right) =7, \]
which is a contradiction.
\end{proof}

\medskip

\begin{thm}\label{Ulrich}
Let $k$ be a field of characteristic zero, $R=k[X_{1}, \ldots, X_{d}]$ a polynomial ring over $k$, and $I$ a zero-dimensional $R$-ideal generated by $d+3$ homogeneous polynomials. If $I$ is integrally closed, then the Rees algebra $\calR(I)$ is a Cohen-Macaulay normal domain. 
\end{thm}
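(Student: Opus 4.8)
The plan is to run the inductive reduction used for Theorems~\ref{main} and \ref{div2} down to dimension three, and then to reduce the remaining three-variable case to the monomial ideal of Theorem~\ref{dim3mono} via Lemma~\ref{grade2}. \textbf{Reductions and setup.} Since $\ch k = 0$, the base change $R \to R \otimes_k \bar{k}$ to an algebraic closure is faithfully flat with geometrically regular fibres, so it preserves integral closedness of ideals, zero-dimensionality, the minimal number of homogeneous generators, and the property that $\calR(\tratto)$ be a Cohen--Macaulay normal domain; hence I may assume $k = \bar{k}$, in particular $k$ infinite. I then induct on $d$, proving the conclusion for every zero-dimensional integrally closed homogeneous $R$-ideal $I$ with $\mu_R(I) \le d+3$. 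When $\mu_R(I) \le d+2$ this is Theorem~\ref{div2} for $d \ge 3$; for $d \le 2$ the assertion holds for any number of generators (\cite[Appendix 5, Theorem 2$'$]{ZS} and \cite[Proposition 5.5]{LT}). So I may assume $d \ge 3$ and $\mu_R(I) = d+3$, and split into two cases.

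\textbf{Case $I \not\subseteq \m^2$.} Then $I$ contains a nonzero linear form $x$ belonging to a minimal homogeneous generating set, and after a linear change of variables $x$ is one of the indeterminates, so $R/(x)$ is a polynomial ring of dimension $d-1 \ge 2$. By \cite[Lemma 4.2]{C2} the ideal $\bar{I} := I/(x)$ is integrally closed; it is zero-dimensional and $\mu_{R/(x)}(\bar{I}) = \mu_R(I) - 1 \le (d-1)+3$, so by the induction hypothesis $\calR(\bar{I})$ is a Cohen--Macaulay normal domain, hence $\gr_{\bar{I}}(R/(x))$ is Cohen--Macaulay (\cite[Remark (3.10)]{GS}). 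One now transports this back to $I$ exactly as in the inductive step of the proof of Theorem~\ref{main}: $xt$ is a nonzerodivisor on $\overline{\calA}/t^{-1}\overline{\calA}$, where $\overline{\calA} = \sum_{n \in \Z} \overline{I^n}t^n$ (\cite[Lemma 1.1(c)]{HU}), which yields $\overline{I^n} :_R x = \overline{I^{n-1}}$ and $\overline{I^n} \subseteq I^n + (x)$ for all $n \ge 1$, whence $\overline{I^n} = I^n$ by induction on $n$; and $\gr_I(R)/(xt)\gr_I(R) \cong \gr_{\bar{I}}(R/(x))$ is Cohen--Macaulay, so $\gr_I(R)$ is Cohen--Macaulay and $\calR(I)$ is Cohen--Macaulay by \cite[Theorem (5)]{L}.

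\textbf{Case $I \subseteq \m^2$.} Since $I$ is $\m$-full (\cite[Theorem (2.4)]{G}), $d+3 = \mu_R(I) \ge \mu_R(\m^2) = \binom{d+1}{2}$, which forces $d \le 3$; combined with the cases already settled we may assume $d = 3$, so $R = k[X,Y,Z]$ and $\mu_R(I) = 6$. By Lemma~\ref{grade2}, $\grade(I_2 R) \ge 2$, where $I_2$ is the space of quadratic forms of $I$. If $\grade(I_2 R) = 3$, then $I_2 R$ is $\m$-primary and generated in degree $2$, so it has a minimal reduction generated by three general quadrics forming a complete intersection of multiplicity $8 = e(\m^2)$; by Rees's theorem $\overline{I_2 R} = \m^2$, and since $I \supseteq I_2 R$ is integrally closed this gives $I = \m^2$, which is normal with Cohen--Macaulay associated graded ring, the latter being a Veronese subring of a polynomial ring. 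If $\grade(I_2 R) = 2$, the plan is to analyse the height-$2$ ideal $I_2 R$ and show that, after a linear change of coordinates, $(X^2, XY, Y^2) \subseteq I$ and in fact $I = (X^2, XY, Y^2, Z^c, XZ^a, YZ^b)$ for suitable integers $1 \le a \le b$ and $c \ge 2$ --- the other height-$2$ configurations of $I_2 R$ being incompatible with $I$ integrally closed and $6$-generated --- so that $I$ is normal by Theorem~\ref{dim3mono}; for this explicit $I$ one then checks directly that $\gr_I(R)$ is Cohen--Macaulay, so that $\calR(I)$ is a Cohen--Macaulay normal domain.

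\textbf{Main obstacle.} Everything up to the reduction to the three-variable case $I \subseteq \m^2$ with $\mu_R(I) = 6$ is a routine adaptation of the arguments already given for Theorems~\ref{main} and \ref{div2}. The serious work is the subcase $\grade(I_2 R) = 2$: extracting from the grade bound of Lemma~\ref{grade2} the precise monomial normal form $(X^2, XY, Y^2, Z^c, XZ^a, YZ^b)$ of Theorem~\ref{dim3mono} --- ruling out the other configurations of the quadratic and higher-degree generators, and verifying that integral closedness is exactly what forces those exponents --- and then establishing Cohen--Macaulayness of $\gr_I(R)$, which Theorem~\ref{dim3mono} does not supply.
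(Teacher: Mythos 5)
Your reductions to $d=3$, $\mu_R(I)=6$, $I\subseteq\m^2$ are sound (and your observation that $\grade(I_2R)=3$ forces $I=\m^2$ via Rees's multiplicity theorem is correct, though not needed in the paper), but the heart of the theorem is the remaining case, and there your plan has a genuine gap. The dichotomy $\grade(I_2R)=3$ versus $\grade(I_2R)=2$ is not the relevant one: the paper splits according to whether $I_2$ contains a quadratic form of rank $3$. Your grade-$2$ case contains many ideals whose quadratic part does contain a rank-$3$ form -- for instance $I=(X,Y)\m+(Z^c)$, where $Y^2+XZ\in I_2$ and $\grade(I_2R)=2$ -- and the determinant computation that yields $(X,Y)^2\subseteq I$ and then the normal form $(X^2,XY,Y^2,Z^c,XZ^a,YZ^b)$ runs only under the hypothesis that \emph{no} element of $I_2$ has rank $3$. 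Your assertion that all other height-$2$ configurations are ``incompatible with $I$ integrally closed and $6$-generated'' is exactly the unproven content, and it is unsupported (and dubious) when rank-$3$ quadrics are present. For that situation the paper uses a completely different idea, absent from your outline: a generic element $f=\sum T_if_i$ over $k(T_1,\dots,T_6)$ has leading form a rank-$3$ quadric, so $\gr_{\m''/(f)}(R''/(f))$ is an isolated singularity with $a$-invariant $-1$, hence $R''/(f)$ is a two-dimensional rational singularity by Flenner \cite{F81}; then $IR''/(f)$ is integrally closed by \cite{HU}, normal (with Cohen-Macaulay normal Rees algebra) by Lipman \cite{L69}, and one specializes back as in the proof of Theorem~\ref{main}. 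This is also precisely where characteristic zero is used; your proposal never invokes it, which is a sign that the case requiring it has been lost. Labelling the grade-$2$ analysis ``the main obstacle'' does not close this: without either the rank-$3$ dichotomy plus the rational-singularity argument, or a proof of your classification claim covering ideals whose $I_2$ contains rank-$3$ forms, the proof does not go through.

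Two further points. The Cohen-Macaulayness you defer (``one then checks directly that $\gr_I(R)$ is Cohen-Macaulay'') costs nothing in the paper: once $I$ is the normal monomial ideal of Theorem~\ref{dim3mono}, $\calR(I)$ is a normal affine semigroup ring, hence Cohen-Macaulay by Hochster \cite{Hoc72}, and in the rank-$3$ case Cohen-Macaulayness comes out of the same specialization argument as in Theorem~\ref{main}. Also, in your case $I\not\subseteq\m^2$ you apply \cite[Lemma 1.1(c)]{HU} to the specific linear form $x$, but that lemma concerns the generic element $\sum z_ia_i$, so the citation does not apply as stated. The facts you need are nevertheless true and should be proved via the grading: since $I$ is homogeneous and contains the variable $x$, one has $I=(J,x)$ with $J$ extended from a polynomial ring in $d-1$ variables and $\overline{I^n}=\sum_i \overline{J^{\,n-i}}x^iR$, which gives both $\overline{I^n}:_Rx=\overline{I^{n-1}}$ and the transfer of normality; this is in effect how the paper reduces from $d\ge 4$ to $d=3$, and it is simpler than rerunning the ascent of Theorem~\ref{main}.
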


\begin{proof}
We may assume that $k$ is algebraically closed. Moreover, it is enough to show that $I$ is normal due to \cite[Theorem 1]{Hoc72}. We can further suppose that $d \geq 3$ and $\mu_{R}(I)=d+3$. Write $\fkm$ for the homogeneous maximal ideal of $R$. If $d \geq 4$, then using the $\m R_{\m}$-fullness of $I_{\m}$ and the same techniques given in the proof of Theorem~\ref{div2},  we may assume that $X_{4}, \ldots, X_{d}$ is a part of regular system of parameters of length $d-3$ in $I$. Then we can write ${\ds I= (J, X_{4}, \ldots, X_{d})}$ for some zero-dimensional ideal $J$ of $k[X_{1}, X_{2}, X_{3}]$. Since $I$ is integrally closed (or normal) if and only if $J$ is integrally closed (or normal respectively), we may assume that $I=J$ and $d=3$. 

\medskip

\noindent Let ${\ds R=k[X, Y, Z]}$, ${\ds \m = (X, Y, Z)}$, and let ${\ds f_{1}, \ldots, f_{6}}$ be a homogeneous minimal generating set of $I$. If $I \not \subset \m^{2}$, then $I$ contains a part of regular system of parameters of length $1$. By Theorem~\ref{main}, $I$ is normal. Therefore, we suppose that ${\ds I \subseteq \fkm^{2}}$. 

\medskip

\noindent We first treat the case where $I_{2}$ contains a quadratic form of rank $3$. Consider the purely transcendental field extension ${\ds k \subset k''=k(T_{1}, \ldots, T_{6})}$, and let ${f= \sum_{i=1}^{6} T_{i}f_{i} }$ be a generic element for $I$ in the ring ${\ds R''= k''[X, Y, Z]_{(X, Y, Z)}}$. Write $\fkm''$ for the maximal ideal of $R''$. Then 
\[ \gr_{\fkm''/(f)} (R''/(f)) = k''[X, Y, Z]/(f^{*}), \]
where $f^{*}$ is the homogeneous component of degree $2$ of $f$. Notice that $f^{*}$ is a generic element for $I_{2}$.  Since $I_{2}$ contains a quadratic form of rank $3$, we then conclude that $f^{*}$ has rank $3$ as well. In other words, ${\ds \gr_{\fkm''/(f)} (R''/(f))}$ is an isolated singularity. Since the $a$-invariant of this standard graded $k''$-algebra is $-1$,  ${\ds \gr_{\fkm''/(f)} (R''/(f))}$ is a  rational singularity (See \cite[Satz 3.1]{F81}). Then $R''/(f)$ is a $2$-dimensional rational singularity according to \cite[Satz 3.5]{F81}. On the other hand, \cite[Theorem 2.1]{HU} shows that $IR''/(f)$ is still integrally closed. By \cite[Theorem 7.1]{L69}, $IR''/(f)$ is normal. Now the proof of Theorem~\ref{main} shows that $I$ is normal.

\medskip

\noindent We now assume that $I_{2}$ does not contain a quadratic form of rank $3$. We claim that $I_{2}$ contains a quadratic form of rank $2$. To prove the claim, suppose that every nonzero element of $I_{2}$ has rank $1$. Then every such element is the square of a linear form.  Since ${\ds \grade I_{2}R \geq 2}$ by Lemma~\ref{grade2}, two of those linear forms have to be linearly independent and then the sum of their squares has rank $2$. This is a contradiction. Thus, $I_{2}$ contains a quadratic form $g$ of rank $2$. After a linear change of variables we may assume that $g=2XY$. Since ${\ds \grade I_{2}R \geq 2}$, there exists an element $h$ of $I_{2}$ such that $g$ and $h$ form a regular sequence. The quadratic form $h$ is represented by a matrix 
\[ M = \left[ \begin{array}{ccc} \alpha & \beta & \gamma \\ \beta & \delta & \epsilon \\ \gamma & \epsilon & \nu \end{array} \right]\]
whose entries are in $k$. By our assumption, ${\ds \rank(\lambda g + h) \leq 2}$  for every $\lambda \in k$. Equivalently, 
\[ \det \left[ \begin{array}{ccc}  \alpha & T+\beta & \gamma \\ T+\beta & \delta & \epsilon \\ \gamma & \epsilon & \mu \end{array} \right] =0 \]
for a variable $T$. Considering the coefficients of $T^{2}$ and $T$ in this polynomial, we obtain $\nu =0$ and $\epsilon \gamma  =0$. Without loss of generality we may assume that $\gamma =0$. Then
\[ 0 = \det(M) = - \alpha \epsilon^{2}. \]
If $\alpha=0$, then ${\ds h = 2 \beta XY + \delta Y^{2} + 2 \epsilon YZ}$, contradicting the assumption that $g$ and $h$ form a regular sequence. It follows that $\epsilon=0$. In other words, $h \in k[X, Y]$. 

\medskip

\noindent  Thus, $g$ and $h$ form a regular sequence of quadrics in $k[X, Y]$. Therefore,
\[ (X, Y)^{2} k [X, Y] = \overline{(g, h)k[X, Y]} \subseteq \overline{I} = I .\]
Then we may assume that ${\ds f_{1}=X^{2}}$, ${\ds f_{2}=XY}$, and ${\ds f_{3}=Y^{2}}$, and that the remaining generators of $I$ are of the form 
\[ f_{4} = Z^{c} + l_{4} Z^{c-1}, \quad f_{5} = l_{5} Z^{a}, \quad f_{6} = l_{6} Z^{b},\]
where $l_{4}$, $l_{5}$, $l_{6}$ are linear forms in $k[X, Y]$, $c \geq 1$ and $a \leq b$. As ${\ds \mu_{R}(I) = 6}$, it follows that $l_{5}$ and $l_{6}$ are linearly independent. Thus, after a linear change of variables in $k[X, Y]$, we obtain 
\[ f_{5} = XZ^{a}, \quad \mbox{and} \quad f_{6} = YZ^{b}. \]
If $a \geq c$, then 
\[ f_{5}  = XZ^{a-c}f_{4} - X l_{4} Z^{a-1} \in (X^{2}, XY,  Y^{2}, f_{4}),\]
contradicting the assumption ${\ds \mu_{R}(I) = 6}$. Thus, $a <c$, and likewise $b <c$. Now, adding suitable multiples of $f_{5}$ and $f_{6}$ to $f_{4}$, we achieve that $f_{4} = Z^{c}$. In conclusion, we have proved that 
\[ I = (X^{2}, XY, Y^{2}, Z^{c}, XZ^{a},  YZ^{b}), \]
where $1 \leq a \leq b \leq c-1$. By Theorem~\ref{dim3mono}, $I$ is normal.
\end{proof}

\begin{ex}
Let $R=k[\![X, Y, Z]\!]$ be the formal power series ring over a field $k$ of characteristic zero. We consider $I=\overline{(X^2, Y^2, Z^4)} = (X^2, XY, Y^2, Z^4, XZ^2, YZ^2) \subseteq \m^2$. Then $I$ is an integrally closed $\m$-primary ideal of $R$ and $\mu_R(I) = 6$, where $\m = (X, Y, Z)$.  Theorem~\ref{Ulrich} shows that the Rees algebra $\calR(I)$ is a Cohen-Macaulay normal domain.
\end{ex}

\section{Rees Algebra of Modules}

Let $(R, \m)$ be a Noetherian local ring. Let $E$ be a finitely generated torsionfree $R$-module having  a rank $e >0$. Suppose that $E_{\fkp}$ is free for all $\fkp \in \Spec(R)$ with $\depth(R_{\fkp}) \leq 1$. Then by \cite[Lemma 4.1]{HU}, there exists an embedding $E \subset R^{e}$ such that $(R^{e}/E)_{\fkp}$ is cyclic whenever $\depth(R_{\fkp}) \leq 1$.  

\medskip

Write ${\ds E=Ra_{1} + \cdots + Ra_{n}}$ and let ${\ds R'=R[ \{ z_{ij} \mid 1 \leq i \leq n, \; 1 \leq j \leq e-1 \}]}$ be a polynomial ring with indeterminates $z_{ij}$. Set
\[ R''= R'_{\m R'}, \quad E'' = R'' \otimes_{R} E, \quad  x_{j}= \sum_{i=1}^{n} z_{ij} a_{i} \in E'', \quad F = \sum_{j=1}^{e-1} R''x_{j}. \]
Then $F$ is a free $R''$-module of rank $e-1$ and $E''/F \simeq I$ for some $R''$-ideal $I$ with $\grade(I) >0$. This ideal $I=I(E)$ is called a {\em generic Bourbaki ideal} of $E$ (\cite[Proposition 3.2, Definition 3.3]{SUV}).

\medskip

 Suppose that $\ell_R(R^{e}/E)< \infty$. Then the {\em deviation} of $E$ is  ${\ds d(E) = \mu_{R}(E) -e+1 - d}$. For more general definition of the deviation, see \cite[Page 633]{SUV}. In particular, a module $E$ is called a {\it complete intersection} (respectively {\it almost complete intersection}) if $d(E)=0$ (respectively $d(E)$=1). Consider the commutative diagram 
\[ 
\xymatrix{
0 \ar[r]&  F \ar[r]\ar[d]^{\simeq} & E'' \ar[r]\ar[d]^{i} &E''/F \cong I \ar[d]\ar[r] & 0\\
0\ar[r] & (R'')^{e-1} \ar[r] & (R'')^{e} \ar[r] &  R''\ar[r]& 0
}
\] Then  ${\ds \ell_{R''}(R''/I) = \ell_{R''} \left( (R'')^{e}/E'' \right) = \ell_{R} (R^{e}/E) < \infty}$. 
Thus, $I$ is $\m''$-primary.

\medskip

\begin{remark}\label{BI}{\rm
Let $(R, \m)$ be a Cohen-Macaualy local ring of dimension $d \ge 3$. Let $E$ be a finitely generated torsionfree $R$-module having  a rank $e >0$. Suppose that $E_{\fkp}$ is free for all $\fkp \in \Spec(R)$ with $\depth(R_{\fkp}) \leq 1$ and that $\ell_R(R^{e}/E)< \infty$. Let $I \simeq E''/F$ be the generic Bourbaki ideal of $E$. Then $\grade(I) \geq 3$, and $E'' \simeq F \oplus I$ (See the proof of \cite[Remark 3.4-(d)]{SUV}.) 
}\end{remark}

Let $(R, \m)$ be a regular local ring of dimension $2$. Let $E$ be a finitely generated torsionfree $R$-module. If $E$ is integrally closed, then the Rees algebra $\calR(E)$ is a Cohen-Macaulay normal domain (See \cite[Corollary 3.8]{HU} and \cite[Theorem 5.2]{K}). Now we consider a regular local ring of dimension greater than $2$.

\begin{thm}\label{mod}
Let $(R, \m)$ be a regular local ring of dimension $d \geq 3$. Let $E$ be a finitely generated torsionfree $R$-module having a rank $e >0$.  Suppose that $E$ is integrally closed and  $\ell_R(R^{e}/E)< \infty$.
If $\mu_R([E + \m R^{e}]/E) \leq 2$ or $d(E) \leq 2$, then  the Rees algebra  $\calR(E)$ is a Cohen-Macaulay normal domain.
\end{thm}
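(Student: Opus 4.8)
The plan is to reduce the statement about modules to the corresponding statements about ideals (Theorems~\ref{main} and~\ref{div2}) via the theory of generic Bourbaki ideals. First I would pass, as usual, to the faithfully flat extension so that the residue field is infinite, and carry out the generic Bourbaki construction described just before the theorem: writing $E'' = F \oplus I$ with $F$ a free $R''$-module of rank $e-1$ and $I = I(E)$ the generic Bourbaki ideal, which is $\m''$-primary in the regular local ring $R''$ of dimension $d$ (note $R''$ is regular since $R$ is and we localize a polynomial extension at the maximal ideal). The key bridge is \cite[Theorem 3.5]{SUV} (or the version appropriate to the normality/Cohen–Macaulay dictionary): $\calR(E)$ is Cohen–Macaulay if and only if $\calR(I)$ is Cohen–Macaulay, and similarly $\calR(E)$ is normal if and only if $\calR(I)$ is normal — here one uses that $E'' \simeq F \oplus I$ (Remark~\ref{BI}) and that Rees algebras behave well under adjoining free summands and under the faithfully flat maps involved. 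So it suffices to show $\calR(I)$ is a Cohen–Macaulay normal domain, i.e.\ to verify the hypotheses of Theorem~\ref{main} or Theorem~\ref{div2} for $I$.

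Next I would check that $I$ is integrally closed. This is the module-to-ideal transfer of integral closedness: since $E$ is integrally closed and $E'' = R'' \otimes_R E$ is integrally closed (flat base change preserves integral closure here), the generic Bourbaki ideal $I$ of an integrally closed module is again integrally closed — this is in \cite{SUV} (the relevant statement relating $\overline{E}$ and $\overline{I}$ under the Bourbaki construction), or can be deduced from \cite[Theorem 2.4, Lemma 4.1]{HU} together with $E'' \simeq F \oplus I$ and the fact that integral closure commutes with direct sums onto free modules. With $I$ integrally closed and $\m''$-primary, I then translate each of the two hypotheses. For the deviation hypothesis: $d(E) \le 2$ means $\mu_R(E) - e + 1 - d \le 2$; since $\mu_{R''}(I) = \mu_{R''}(E'') - (e-1) = \mu_R(E) - e + 1$, this gives $\mu_{R''}(I) \le d + 2$, and Theorem~\ref{div2} applies directly. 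For the other hypothesis: $\mu_R([E + \m R^e]/E) \le 2$ controls $v$ of the quotient; under the Bourbaki construction this should translate to $\ell_{R''}((I + (\m'')^2)/(\m'')^2) \ge d-2$, equivalently $v(R''/I) \le 2$, so that Theorem~\ref{main} applies. Concretely, $[E'' + \m'' R''^{\,e}]/E'' \cong [I + \m'' R'']/I$ by the snake lemma on the defining diagram, so $\mu_{R''}([I + \m'' R'']/I) = \mu_R([E+\m R^e]/E) \le 2$; since $R''/I$ has embedding dimension $d - \ell(\ (I+(\m'')^2)/(\m'')^2\ )$ in a way governed by this number, one gets that $I$ contains a part of a regular system of parameters of length $d-2$.

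The main obstacle I expect is the precise bookkeeping in the second case — converting $\mu_R([E + \m R^e]/E) \le 2$ into the statement that $I$ contains $d-2$ elements of a regular system of parameters. One has to be careful that the generic elements $x_1, \dots, x_{e-1}$ spanning $F$ lie in $\m'' E''$ (they do, since the $a_i \in \m R^e$ as $E$ has finite colength and we may take the $a_i$ among minimal generators, but the $z_{ij}$ being units-up-to-$\m''$ matters), and that passing from $E''$ modulo $F$ does not disturb the count of the images of generators in $\m R^e/\m^2 R^e$ versus the count for $I$ in $\m''/(\m'')^2$. The clean way is the commutative diagram already set up in the excerpt: apply $-\otimes_{R''} R''/(\m'')^2$ (or work modulo $(\m'')^2 R''^{\,e}$) and chase, using that $F$ is free and $x_j \in \m'' R''^{\,e}$, to get an isomorphism $[I + \m'' R'']/(I + (\m'')^2)$-type term matching $[E + \m R^e]/(E + \m^2 R^e)$-type term. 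Once that identification is in place, the hypothesis $\mu_R([E+\m R^e]/E)\le 2$ forces $v(R''/I) \le 2$, and the rest is Theorem~\ref{main}. The Cohen–Macaulay/normality equivalence between $\calR(E)$ and $\calR(I)$ is standard from \cite{SUV} once $E'' \simeq F \oplus I$ is known, so that part should go through without difficulty.
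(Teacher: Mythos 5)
Your proposal is correct and follows essentially the same route as the paper: pass to the generic Bourbaki ideal $I$ with $E''\simeq F\oplus I$, use \cite[Theorem 4.4]{HU} to see $I$ is integrally closed, identify $[E''+\m''(R'')^e]/E''\simeq \m''/I$ (so the first hypothesis gives $v(R''/I)\le 2$, i.e.\ $I$ contains a part of a regular system of parameters of length $d-2$) and compute $d(E)=\mu_{R''}(I)-d$ for the second, then apply Theorems~\ref{main} and~\ref{div2} and descend to $\calR(E)$ via \cite[Theorem 3.5]{SUV}. The only differences are cosmetic (slightly vaguer citation of the Hong--Ulrich result and extra caution about the bookkeeping, which the direct-sum decomposition of Remark~\ref{BI} makes unnecessary).
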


\begin{proof} 
Let $I \simeq E''/F$ be the generic Bourbaki ideal of $E$. By \cite[Theorem 4.4]{HU}, $I$ is integrally closed.  By Remark~\ref{BI}, $E'' \simeq F \oplus I$. Suppose that $\mu_R([E + \m R^{e}]/E) \le 2$. Consider the isomorphism:
\[ [E'' + \m (R'')^{e}]/E'' \simeq (F'' \oplus \m)/(F'' \oplus I) \simeq \m''/I. \]
Then $\mu_{R''}(\m''/I) \le 2$. Equivalently, $I$ contains a part of regular system of parameters of length $d-2$. Then, by Theorem~\ref{main}, the Rees algebra $\calR(I)$ is a Cohen-Macaulay normal domain. By \cite[Theorem 3.5]{SUV}, the Rees algebra $\calR(E)$  is a Cohen-Macaulay normal domain. 

\medskip

\noindent Suppose that $d(E) \leq 2$. Then 
\[ d(E) = \mu_{R}(E) -e+1 - d  = e-1 + \mu_{R''}(I)  - e+1-d =  \mu_{R''}(I)  - d \leq 2. \]
By Theorem~\ref{div2}, the Rees algebra $\calR(I)$ is a Cohen-Macaulay normal domain. By \cite[Theorem 3.5]{SUV}, the Rees algebra $\calR(E)$  is a Cohen-Macaulay normal domain. 
\end{proof}

\begin{ex}
Let $R=k[\![X, Y, Z]\!]$ be the formal power series ring over a field $k$. Let $I=(f) + \m^n$ for each $f \in \m\setminus \m^2$ and $n \ge 1$. For each $e > 0$, let $E=I \oplus \m^{\oplus (e-1)}$. 
Then $E$ is integrally closed, $\ell_R(R^{e}/E) <\infty$, and $\mu_R([E+ \m R^{e}]/E) \le 2$. Hence, $\calR(E)$ is a Cohen-Macaulay normal domain. 
\end{ex}

Recall that a $R$-module $E$ is called a {\it parameter module} if there is an embedding $E \subseteq R^{e}$ with  $\ell_R(R^{e}/E) < \infty$, and $\mu_{R}(E)=d+e-1$ (or $d(E)=0$).

\begin{cor}
Let $(R, \m)$ be a regular local ring of dimension $d \geq 2$ and $E$ a parameter module. If $E$ is integrally closed, then $\calR(E)$ is a Cohen-Macaulay normal domain.
\end{cor}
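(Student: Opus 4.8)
The plan is to reduce the statement to results already available, splitting on the dimension of $R$.

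\medskip

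\noindent For $d = 2$: I would observe that a parameter module $E \subseteq R^{e}$ with $\ell_R(R^{e}/E) < \infty$ is in particular a finitely generated torsionfree $R$-module of rank $e$, being a submodule of $R^{e}$ whose finite colength forces the rank to equal $e$. Hence the two-dimensional statement recalled just before Theorem~\ref{mod} --- namely \cite[Corollary 3.8]{HU} together with \cite[Theorem 5.2]{K} --- applies and shows that the integral closedness of $E$ alone already guarantees that $\calR(E)$ is a Cohen-Macaulay normal domain. In this case the parameter hypothesis $\mu_R(E) = d + e - 1$ is not even used.

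\medskip

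\noindent For $d \geq 3$: I would invoke Theorem~\ref{mod} directly. By definition of a parameter module, $d(E) = \mu_R(E) - e + 1 - d = 0 \leq 2$, and the remaining hypotheses of Theorem~\ref{mod} --- that $E$ is finitely generated torsionfree of positive rank, integrally closed, and of finite colength in $R^{e}$ --- are exactly the data defining a parameter module. (The freeness of $E_{\fkp}$ for primes $\fkp$ with $\depth(R_{\fkp}) \leq 1$, needed so that the generic Bourbaki ideal of $E$ is defined, is automatic here since such $R_{\fkp}$ is a field or a discrete valuation ring, over which every torsionfree module is free.) Theorem~\ref{mod} then yields that $\calR(E)$ is a Cohen-Macaulay normal domain.

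\medskip

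\noindent There is no genuine obstacle: the corollary is a direct specialization of Theorem~\ref{mod} when $d \geq 3$ and of the classical two-dimensional theory when $d = 2$. The only item to record is the harmless bookkeeping that a parameter module is torsionfree of rank $e$ with $\ell_R(R^{e}/E) < \infty$, which is immediate from the definition, so that both the hypotheses of Theorem~\ref{mod} and those of the cited two-dimensional results are met.
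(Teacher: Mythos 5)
Your proposal is correct and is essentially the argument the paper intends: the corollary is stated without proof precisely because it is the specialization of Theorem~\ref{mod} (with $d(E)=0\le 2$) when $d\ge 3$, combined with the two-dimensional result of Hong--Ulrich and Kodiyalam recalled just before Theorem~\ref{mod} when $d=2$. Your bookkeeping that a parameter module is torsionfree of rank $e$ with finite colength (and that freeness in depth at most one is automatic) is exactly the routine verification needed.
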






\begin{thebibliography}{20}

\bibitem{C1}
C. Ciuperc\u{a}, Integrally closed almost complete intersection ideals, Journal of Algebra, {\bf 302} (2006), 720--728.

\bibitem{C2}
C. Ciuperc\u{a}, Integral closure and generic elements, Journal of Algebra, {\bf 328} (2011), 122--131.

\bibitem{F81} 
H. Flenner, Rationale quasihomogene Singulartit\"{a}ten, Arch. Math. {\bf 36} (1981), 35--41.


\bibitem{G}
S. Goto, Integral closedness of complete intersection ideals, J. Algebra, {\bf 108} (1987), 151--160.


\bibitem{GS}
S. Goto and Y. Shimoda, On the Rees algebras of Cohen-Macaulay local rings, Commutative algebra (Fairfax, Va., 1979), 201--231, Lecture Notes in Pure and Appl. Math., {\bf 68}, Dekker, New York, 1982.



\bibitem{Hoc72} 
M. Hochster, Rings of invariants of tori, Cohen-Macaulay rings generated by monomials, and polytopes,  Annals of Mathematics, {\bf 96} (1972), no. 2, 318--337.


\bibitem{HU} 
J. Hong and B. Ulrich, Specialization and integral closure, J. London Math. Soc., {\bf 90} (2014), no.2, 861--878. 

\bibitem{HH}
S. Huckaba and C. Huneke, Normal ideals in regular rings, J. reine angew. Math., {\bf 510} (1999), 63--82. 

\bibitem{H}
C. Huneke, Complete ideals in two-dimensional regular local rings, Commutative Algebra, Mathematical Sciences Research Institute Publications, {\bf 15} (1989), 325--338.



\bibitem{K}
V. Kodiyalam, Integrally closed modules over two-dimensional regular local rings, Trans. Amer. Math. Soc., {\bf 347} (1995), 3551--3573.

\bibitem{L69}
J. Lipman, Rational singularities, with applications to algebraic surfaces and unique factorization, Inst. Hautes \'{E}tudes Sci. Publ. Math., {\bf 36} (1969), 195--279.

\bibitem{L}
J. Lipman, Cohen-Macaulayness in graded algebras, Math. Res. Lett. {\bf 1} (1994), 149--157.

\bibitem{LT}
J. Lipman and B. Tessier, Pseudo-rational local rings and a theorem of Brian\c{c}on-Skoda about integral closures of ideals, Michigan Math. J., {\bf  28} (1981), 97--116.

\bibitem{RRV}
L. Reid, L. G. Roberts, M. A. Vitulli, Some results on normal homogeneous ideals, Comm. Algebra, {\bf 31} (2003), 4485--4506.

\bibitem{SUV}
A. Simis, B. Ulrich, and W. V. Vasconcelos, Rees algebras of modules, Proc. London Math. Soc., {\bf 87} (2003), no.3, 610--646.

\bibitem{VV}
P. Valabrega and G. Valla, Form rings and regular sequences, Nagoya Math. J., {\bf 72} (1978), 93--101.



\bibitem{W87}
J. Watanabe, $\frak{m}$-full ideals, Nagoya Math. J. {\bf 106} (1987), 101--111.


\bibitem{Z}
O. Zariski, Polynomial ideals defined by infinitely near base points, Amer. J. Math., {\bf 60} (1938), no.1, 151-204. 

\bibitem{ZS}
O. Zariski and P. Samuel, Commutative Algebra Volume II, Springer, 1960.


\end{thebibliography}
\end{document}